\newcommand{\beq}{\begin{equation}}
\newcommand{\eeq}{\end{equation}}
\newcommand{\bea}{\begin{eqnarray}}
\newcommand{\eea}{\end{eqnarray}}
\newcommand{\beas}{\begin{eqnarray*}}
\newcommand{\eeas}{\end{eqnarray*}}
\newtheorem{theorem}{Theorem}[section]
\newtheorem{proposition}[theorem]{Proposition}
\newtheorem{lemma}[theorem]{Lemma}
\newtheorem{remark}[theorem]{Remark}
\newtheorem{example}[theorem]{Example}
\newtheorem{examples}[theorem]{Examples}
\newtheorem{foo}[theorem]{Remarks}
\newcommand{\ep}{\epsilon}
\newcommand{\Hh}{\hat{\mathcal{H}}}
\newcommand{\E}{\mathbb E}
\newcommand{\C}{\mathbb C}
\newcommand{\U}{\mathbf U}
\newcommand{\bB}{\mathbf B}
\newcommand{\tr}{\mathrm tr}
\title{Asymptotic windings of the block determinants of a unitary Brownian motion and related diffusions}
\author{Fabrice Baudoin\footnote{F.B. was partly supported by the NSF grant DMS~1901315.},  Jing Wang\footnote{J.W. was partly supported by the NSF grant DMS~1855523.} 
}
\begin{document}
\maketitle

\begin{abstract}
 We study several matrix diffusion processes constructed from a unitary Brownian motion.  In particular, we  use the Stiefel fibration to lift the Brownian motion of the complex Grassmannian to the complex Stiefel manifold and deduce a skew-product decomposition of the Stiefel Brownian motion. As an application,  we prove  asymptotic  laws for the determinants of the block entries of the unitary Brownian motion.
\end{abstract}

\tableofcontents

\section{Introduction}

 The  Brownian motion on the Lie group of complex unitary matrices, in short unitary Brownian motion, is an extensively studied object in random matrix theory (see for instance \cite{MR3748321, MR2407946} and references therein).  In this paper we study several diffusion processes naturally associated to this Brownian motion and show how they can be used to compute the exact or asymptotic distribution  of some functionals. In particular, we emphasize the role of the Stiefel fibration over the complex Grassmannian space as an effective computational tool. As an application of our methods we obtain for instance the following result.

\begin{theorem}\label{winding intro}
Let
$$U_t=\begin{pmatrix}  X_t & Y_t \\ Z_t & W_t \end{pmatrix}$$
 be a Brownian motion on the unitary group $\mathbf{U}(n)$ with  $Z_t \in \mathbb{C}^{k\times k}$, $1\le k \le n-1$. Assume that $\det Z_0 \neq 0$. One has then the polar decomposition
\[
\det (Z_t)=\varrho_t e^{i\theta_t}
\]
where $0< \varrho_t \le 1$ is a continuous semimartingale such that, in distribution, when $t \to +\infty$,
\[
\varrho_t^2 \to \prod_{j=1}^{\min (k,n-k)} \mathfrak{B}_{j,\max (k,n-k)}
\]
where $\mathfrak{B}_{a,b}$ are independent beta random variables with parameters $(a,b)$ and $\theta_t$ is a real-valued continuous martingale such that the following convergence holds in distribution when $t \to +\infty$
\[
\frac{\theta_t}{t} \to \mathcal{C}_{k(n-k)},
\]
where $\mathcal{C}_{k(n-k)}$ is a Cauchy distribution of parameter $k(n-k)$.
\end{theorem}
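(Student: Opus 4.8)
The plan is to treat the modulus and the phase separately, using throughout that the first $k$ columns of $U_t$ perform a Brownian motion on the complex Stiefel manifold $V_{n,k}$ which projects to a Brownian motion $\mathcal V_t$ on the Grassmannian $G_{n,k}$. For the modulus I would first observe that $\varrho_t^2=\det(Z_tZ_t^{*})$ is the product of the squared cosines of the nontrivial principal angles between $\mathcal V_t$ and the fixed coordinate $k$-plane $\mathrm{span}(e_{n-k+1},\dots,e_n)$, so that $\varrho_t^2$ is a function of $\mathcal V_t$ alone. Since $G_{n,k}$ is compact and its Brownian motion is ergodic with the normalized invariant measure as unique stationary law, $\mathcal V_t$ converges in distribution to the uniform measure, and hence $\varrho_t^2$ converges to the determinant of the corresponding complex matrix-variate Jacobi ensemble. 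Exactly $\min(k,n-k)$ of the principal angles are nontrivial (the remaining $k-\min(k,n-k)$ have cosine one and contribute the factor $1$), so the classical multiplicative factorization of the Jacobi determinant into independent scalar betas yields $\prod_{j=1}^{\min(k,n-k)}\mathfrak B_{j,\max(k,n-k)}$.

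For the phase I would apply Itô's formula to the holomorphic map $Z\mapsto\log\det Z$. Writing the martingale part of $Z_t$ from $dU_t=U_t\,dB_t+(\text{drift})\,dt$ and using the completeness relations for an orthonormal basis of $\mathfrak u(n)$, one computes the two complex brackets $\langle d\log\det Z,d\log\det Z\rangle=-\kappa k\,dt$ and $\langle d\log\det Z,\overline{d\log\det Z}\rangle=\kappa'\,\mathrm{tr}\!\big((Z^{*}Z)^{-1}\big)\,dt$, with $\kappa'>0$. Taking imaginary parts, every drift term is real and cancels, so $\theta_t$ is a continuous martingale with $d\langle\theta\rangle_t=\tfrac12\big(\kappa k+\kappa'\,\mathrm{tr}((Z_t^{*}Z_t)^{-1})\big)\,dt$. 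By Dambis--Dubins--Schwarz, $\theta_t=\gamma_{\langle\theta\rangle_t}$ for a standard one-dimensional Brownian motion $\gamma$ independent of the radial motion, so that $\E\big[e^{i\xi\theta_t/t}\big]=\E\big[\exp(-\tfrac{\xi^2}{2t^2}\langle\theta\rangle_t)\big]$. The whole problem thus reduces to proving that $\langle\theta\rangle_t/t^2$ converges in distribution to a one-sided $\tfrac12$-stable law with Laplace transform $e^{-\lambda\sqrt{2\,\cdot}}$ and $\lambda=k(n-k)$, since this mixture with the Gaussian $\gamma$ produces exactly the Cauchy law of parameter $k(n-k)$.

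The remaining and hardest step is therefore the $t^2$-asymptotics of the clock $\int_0^t\mathrm{tr}((Z_s^{*}Z_s)^{-1})\,ds=\int_0^t\sum_j\sigma_j(s)^{-2}\,ds$. The integrand is bounded away from the Schubert divisor $\{\det Z=0\}$ (real codimension $2$) and blows up like $\sigma_{\min}^{-2}$ near it. I would localize near the divisor, where the transverse motion is that of a planar Brownian motion and $\int\sigma_{\min}^{-2}\,ds$ is the classical winding clock: a single approach reaching distance $\varepsilon$ contributes a clock increment of order $(\log\varepsilon)^2$ with probability of order $1/\log(1/\varepsilon)$, so each excursion contributes the \emph{universal} tail index $\tfrac12$, independent of the ambient geometry. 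By ergodicity such excursions accrue at a rate linear in $t$; summing order-$t$ independent index-$\tfrac12$ increments yields the $t^2$ scaling and the $\tfrac12$-stable limit. Finally the $\min(k,n-k)$ nontrivial singular values give asymptotically independent windings, since deep excursions of distinct $\sigma_j$ do not overlap, each a Cauchy variable of scale $\max(k,n-k)$; as Cauchy scales add under independent summation the total scale is $\min(k,n-k)\cdot\max(k,n-k)=k(n-k)$.

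I expect the main obstacle to be precisely this last analysis: justifying the localization, establishing the asymptotic independence of the individual windings, and above all identifying the per-block scale as exactly $\max(k,n-k)$, which amounts to quantifying the intensity with which the diffusion approaches the divisor in terms of the Grassmannian geometry and the hard edge of the Jacobi ensemble. A cleaner route, should the radial eigenvalue process turn out to be an explicitly integrable Jacobi (Wright--Fisher) diffusion with known resolvent, would bypass excursion theory altogether and compute $\lim_{t\to\infty}\E\big[\exp(-\mu\langle\theta\rangle_t/t^2)\big]$ directly from the boundary behaviour of that resolvent, reading off both the $\tfrac12$-stable Laplace transform and the constant $k(n-k)$ at once.
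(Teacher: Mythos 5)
Your treatment of the modulus and your overall skeleton for the phase (write $\theta$ as a martingale, time-change it by the clock $\int_0^t\mathrm{tr}\big((Z_s^*Z_s)^{-1}\big)\,ds$, and reduce the Cauchy limit to a one-sided $\tfrac12$-stable limit for $t^{-2}$ times that clock) agree with the paper, which obtains the representation $i\theta_t=i\theta_0+\mathrm{tr}(D_t)+\int_{w[0,t]}\mathrm{tr}(\eta)$ from a skew-product decomposition of the Stiefel Brownian motion and then proves exactly the stable limit you need (Theorem \ref{limit_oi}). One secondary gap first: after Dambis--Dubins--Schwarz you assert that the Brownian motion $\gamma$ is independent of the clock. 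This is not automatic and is a place where the paper does real work: diagonalizing $J=w^*w=V\Lambda V^*$, the martingale driving the phase is $\mathrm{tr}\big(d\mathbf{B}^*\sqrt J-\sqrt J\, d\mathbf{B}\big)$, which is orthogonal to the Hermitian combination driving the eigenvalues, and this orthogonality is what yields a time-changed Brownian motion with a clock independent of it.

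The central gap is the $\tfrac12$-stable limit for the clock itself. Your excursion-theoretic sketch rests on two unproved claims: (i) that the deep excursions of the distinct singular values $\sigma_j$ toward $0$ are asymptotically independent, and (ii) that each contributes a Cauchy scale of exactly $\max(k,n-k)$. Neither is clear: the eigenvalues of $Z_t^*Z_t$ form a non-colliding, strongly interacting system, so independence cannot be taken for granted, and the per-block constant is precisely the quantity that has to be computed, not guessed from the $k=1$ case. The paper does not localize at all; it runs a Girsanov transform with density built from $\det(I_k+J_t)^{-\alpha}$ (Lemma \ref{martingale g}), which converts $\E\big[\exp\big(-2\alpha^2\int_0^t\mathrm{tr}(J)\,ds\big)\big]$ into an expectation under a tilted non-colliding Jacobi eigenvalue process whose transition density is an explicit Karlin--McGregor determinant of one-dimensional Jacobi kernels; extracting the leading term of the spectral expansion and setting $\alpha=\sqrt{\lambda}/(\sqrt2\,t)$ gives the Laplace transform $e^{-k(n-k)\sqrt{2\lambda}}$ in one stroke, with the constant $k(n-k)$ emerging from the drift rate $2k(n-k)\alpha t$ rather than from any additivity of scales. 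This is exactly the ``cleaner route'' you mention at the end --- the radial process is an integrable non-colliding Jacobi diffusion --- and without it, or a full excursion analysis in which the independence and the constant are actually justified, the proof is incomplete. (A further point you would eventually need: your argument as written only covers $k\le n-k$, since for $k>n-k$ the matrix $J$ is singular; the paper handles this by passing to $U_t^*$ and using $\mathrm{tr}\big[(ZZ^*)^{-1}-I_k\big]=\mathrm{tr}\big[(YY^*)^{-1}-I_{n-k}\big]$.)
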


The study of the limit law of $\varrho_t$ can be deduced from known results about the complex Jacobi ensemble, see \cite{MR2365642}. Therefore, our main contribution lies in the study of the winding process $\theta_t$. We note that for $k=1$, the theorem therefore yields a Spitzer's type theorem (see \cite{MR104296}) for the windings of each of the entries of a unitary Brownian motion (the choice of bottom left corner block  for $Z$ is  arbitrary). Overall, the proof of Theorem \ref{winding intro}  is rather long.  An important step, interesting in itself, is to obtain the  representation 
\[
  \theta_t = \theta_0+ i \mathrm{tr}(D_t)+ \int_{w[0,t]} \alpha
\]
where $D_t$ is a Brownian motion on the Lie algebra $\mathfrak{u}(k)$, $w$ is a Brownian motion on the complex Grassmannian manifold $G_{n,k}$ 
and  $\alpha$ is an almost everywhere defined one-form on $G_{n,k}$ whose exterior derivative yields the K\"ahler form of $G_{n,k}$. To study the functional $\int_{w[0,t]} \alpha$ which might be interpreted as a generalized stochastic area process in the sense of \cite{baudoin2017stochastic}, we prove that $\int_{w[0,t]} \alpha$ is a time-changed Brownian motion with independent clock $\int_0^t \mathrm{tr} (w_s^* w_s) ds$ and finally study the distribution of this latter additive functional using a  Girsanov transform in the spirit of  \cite{baudoin2017stochastic, doumerc2005matrices,  Yor1}.

The paper is structured as follows. Section 2 mostly collects preliminary results. We  present  the geometry of  the Stiefel fibration over the complex Grassmannian space, introduce a convenient set of coordinates and show how to obtain representations of the Brownian motions on the Stiefel and complex Grassmannian spaces from a unitary Brownian motion, see Theorem \ref{main:s1}. We then study some related eigenvalue processes. In Section 3, we take advantage of the Stiefel fibration over the complex Grassmannian space to explicitly write a skew-product decomposition of the Stiefel Brownian motion. Section 4 is devoted to the study of limit laws of some functionals of the unitary Brownian motion and culminates with the proof of Theorem \ref{winding intro}

\

\textbf{Notations:}

\begin{itemize}

\item If $M \in \mathbb{C}^{n \times n}$ is a $n \times n$ matrix with complex entries, we will denote $M^*=\overline{M}^T$ its adjoint.

\item If $z_i=x_i +i y_i$ is a complex coordinate system, let
\[
\frac{\partial }{\partial z_i }= \frac{1}{2} \left(  \frac{\partial }{\partial x_i } -i \frac{\partial }{\partial y_i }\right) , \quad \frac{\partial }{\partial \bar{z}_i }= \frac{1}{2} \left(  \frac{\partial }{\partial x_i } +i \frac{\partial }{\partial y_i }\right).
\]

\item Throughout the paper we work on a filtered probability space $(\Omega, (\mathcal{F}_t)_{t \ge 0} , P)$ that satisfies the usual conditions.

\item If $X$ and $Y$ are semimartingales, we denote $\int X dY$ the It\^o integral, $\int X \circ dY$ the Stratonovich integral and $\int dX dY$ or $\langle X, Y \rangle$ the quadratic covariation.

\item For matrix-valued semimartingales $M$ and $N$, the quadratic variation $\int dMdN$ is a matrix such that $\left(\int dMdN\right)_{ij}= \sum_{\ell} \int dM_{i\ell}dN_{\ell j}$.

\item If $M$ is a semimartingale and $\eta$ a one-form, then $\int_{M[0,t]} \eta$ denotes the Stratonovich line integral of $\eta$ along the paths of $M$.

\end{itemize}

\section{Some diffusions related to unitary Brownian motion}\label{Sec2}

\subsection{Stiefel fibration} \label{fibration}

Let $n\in \mathbb{N}$, $n \ge 2$, and $k\in\{1,\dots, n\}$. The complex Stiefel manifold $V_{n,k}$ is the set of  unitary $k$-frames  in $\C^n$. In matrix notation we have
\[
V_{n,k}=\{M \in \C^{n\times k}| M^* M=I_k\}.
\] 

As such $V_{n,k}$ is therefore an algebraic compact embedded submanifold of $\C^{n\times k}$ and  inherits from  $\C^{n\times k}$ a Riemannian  structure. We note that $V_{n,1}$ is isometric to the unit sphere  $\mathbb{S}^{2n-1}$. There is a right isometric action of the  unitary group $\mathbf{U}(k)$ on $V_{n,k}$, which is simply given by the right matrix multiplication: $M  g$, $M \in V_{n,k}$,  $g \in \mathbf{U}(k)$. The quotient space by this action $G_{n,k}:=V_{n,k} / \mathbf{U}(k)$ is the complex Grassmannian manifold.  It is a compact manifold of complex dimension $k(n-k)$. We note that $G_{n,k}$ can be identified with the set of $k$-dimensional subspaces of $\mathbb{C}^n$. In particular $G_{n,1}$ is the complex projective space $\mathbb{C}P^{n-1}$. Since $G_{n,k}$ and $G_{n,n-k}$ can be identified with each other via orthogonal complement, without loss of generality we will therefore assume throughout the paper that \fbox{$k \le n-k$}, except when explicitly mentioned otherwise (see section \ref{duality section}). 

Let us quickly comment on the Riemannian structure of $G_{n,k}$ that we will be using and that is induced from the one of $V_{n,k}$.  From Example 2.3 in \cite{BIHP} , there exists a unique Riemannian metric on $G_{n,k}$ such that the projection map $\pi: V_{n,k} \to G_{n,k}$ is a Riemannian submersion. From Example 2.5 in \cite{BIHP} and Theorem 9.80 in \cite{Besse} the fibers of this submersion are totally geodesic submanifolds of $V_{n,k}$ which are isometric to $\mathbf{U}(k)$. This therefore yields a fibration:
\[
\mathbf{U}(k) \to V_{n,k} \to G_{n,k}
\]
which is often referred to as the Stiefel fibration, see  \cite{autenried2014sub, jurdjevic2019extremal}.  We note that for $k=1$ it is nothing else but the classical  Hopf fibration considered from the probabilistic viewpoint in \cite{baudoin2017stochastic}:
\[
\mathbf{U}(1) \to \mathbb{S}^{2n-1} \to \mathbb{C}P^{n-1}.
\]
For further details on the Riemannian geometry of the complex Grassmannian manifolds we also refer to \cite{wong1967differential,wong1968sectional}, see in particular Theorem 4 in \cite{wong1967differential}. 
%

\subsection{Inhomogeneous coordinates on $G_{n,k}$}\label{inhomogeneous}

We consider  the open set $\widehat{V}_{n,k} \subset V_{n,k}$ given by
\[
\widehat{V}_{n,k}= \left\{ \begin{pmatrix} X  \\ Z \end{pmatrix}  \in  V_{n,k} , \, \det Z \neq 0 \right\}
\]
and the smooth map $p : \widehat{V}_{n,k} \to \C^{(n-k)\times k}$ given by $p \begin{pmatrix} X  \\ Z \end{pmatrix}  = X Z^{-1}.$ It is clear that for every $g \in \mathbf{U}(k)$ and $M \in V_{n,k}$, $p( M  g)= p(M)$. Since $p$ is a submersion from $\widehat{V}_{n,k}$ onto its image $ p ( \widehat{V}_{n,k}) = \C^{(n-k)\times k} $ we deduce that there exists a unique  diffeomorphism $\Psi$ from  an open set of $G_{n,k}$ onto $\C^{(n-k)\times k}$ such that 
\begin{align}\label{inh-coord}
 \Psi \circ \pi= p.
 \end{align}
 The map $\Psi$ induces a (local) coordinate chart on $G_{n,k}$ that we call inhomogeneous by analogy with the case $k=1$ which corresponds to the complex projective space. The Riemannian metric of $G_{n,k}$ is then transported to $\C^{(n-k)\times k}$ using the map $\Psi$. In the sequel, we will denote $\C^{(n-k)\times k}$ endowed with this Riemannian metric by $\widehat{G}_{n,k}$ in order to emphasize the Riemannian structure that is used. Note that by construction $\widehat{G}_{n,k}$ is isometric to an open  subset of $G_{n,k}$ and that it \textit{differs} from $G_{n,k}$ by the sub-manifold at $\infty$, $\det (Z)=0$. In the case $n=2, k=1$, $G_{n,k}=\mathbb{CP}^{1}$ and the above description corresponds to the classical one-point compactification description $\mathbb{CP}^{1}=\mathbb{C} \cup \{ \infty \}$. We note that the Stiefel fibration
 \[
\mathbf{U}(k) \to V_{n,k} \to G_{n,k}
\]
yields a fibration
\[
\mathbf{U}(k) \to\widehat{V}_{n,k} \to \widehat{G}_{n,k}
\]
that we still refer to as the Stiefel fibration. The projection map $p:\widehat{V}_{n,k} \to \widehat{G}_{n,k}$,  $ \begin{pmatrix} X  \\ Z \end{pmatrix}  \to  X Z^{-1}$ is then a Riemannian submersion with totally geodesic fibers isometric to $\mathbf{U}(k)$.

\subsection{Brownian motions on $\widehat{G}_{n,k}$ and $V_{n,k}$}

In this section, we show how the Brownian motions on $\widehat{G}_{n,k}$ and $V_{n,k}$ can be constructed from a Brownian motion on the unitary group $\mathbf{U}(n)$. In the following, we will  use the block notations as below: For any $U\in \mathbf{U}(n)$ and $A\in \mathfrak{u}(n)$ we will write
\[
U=\begin{pmatrix}  X & Y \\ Z & V \end{pmatrix}, \quad A=\begin{pmatrix}  \alpha & \beta \\ \gamma & \ep \end{pmatrix}
\]
where $X,\gamma \in \C^{(n-k)\times k}$, $Y,\ep\in \C^{(n-k)\times (n-k)}$, $Z,\alpha \in \C^{k\times k}$, $V,\beta\in \C^{k\times (n-k)}$.  We recall that the Lie algebra $\mathfrak{u}(n)$ consists of all skew-Hermitian matrices
\[
\mathfrak{u}(n)=\{X\in \C^{n\times n}|X=-X^*\},
\]
which we equip  with the inner product $\langle X,Y\rangle_{\mathfrak{u}(n)}=-\frac12\mathrm{tr}(XY)$. This induces a Riemannian metric on $\U(n)$.   Consider now on $\mathfrak{u}(n)$ a Brownian motion $(A_t)_{t \ge 0}$ and  the matrix-valued process  $(U_t)_{t \ge 0}$ that satisfy the Stratonovich stochastic differential equation:
\begin{equation}\label{eq-du-da}
\begin{cases}
dU_t=U_t\circ dA_t ,\\
U_0=\begin{pmatrix}  X_0 & Y_0 \\ Z_0 & V_0 \end{pmatrix}, \, \det Z_0 \neq 0.
\end{cases}
\end{equation}
The process  $(U_t)_{t \ge 0}$ is a Brownian motion on $\mathbf{U}(n)$ (which is not started from the identity).  If we write the block decomposition
\[
 U_t=\begin{pmatrix}  X_t & Y_t \\ Z_t & V_t \end{pmatrix}
 \]
 then, it is known that both of the processes $X^*X$ and $Z^* Z=I_k-X^*X$ belong to the well-known family of (complex) matrix Jacobi processes that have already been extensively  studied in the literature, see for instance \cite{MR3842169}, \cite{demni2010beta},  \cite{doumerc2005matrices}, \cite{graczyk2013multidimensional} and \cite{graczyk2014strong}. In particular, one can see that $Z^*Z$ satisfies 
 \[
 d( Z^*Z) =  \sqrt{Z^*Z}d\bB\sqrt{I_k-Z^*Z}+\sqrt{I_k-Z^*Z}d\bB^* \sqrt{Z^*Z}+\bigg(2kI_k-2nZ^*Z \bigg)dt
\]
where $\bB$ is a $\mathbb{C}^{k \times k}$ Brownian motion. 

\begin{theorem}\label{main:s1}
Let $U_t=\begin{pmatrix}  X_t & Y_t \\ Z_t & V_t \end{pmatrix}$, $t\ge0$ be the solution of \eqref{eq-du-da}.  
\begin{enumerate}
\item The process $\begin{pmatrix}  X_t  \\ Z_t  \end{pmatrix}_{t\ge0}$ is a Brownian motion on $V_{n,k}$;

\item We have $\mathbb{P} \left( \inf\{t>0, \det Z_t=0\} <+\infty \right)=0$  and the process $(w_t)_{t \ge 0}:=(X_tZ_t^{-1})_{t \ge 0}$ is a Brownian motion on $\widehat{G}_{n,k}$  with  generator  $\frac12\Delta_{\widehat{G}_{n,k}}$, where 
\begin{align*}
\Delta_{\widehat{G}_{n,k}}&={{4}}\sum_{1\le i, i'\le n-k, 1\le j, j'\le k}(I_{n-k}+ww^*)_{ii'} (I_k+w^*w)_{j'j}\frac{\partial^2}{\partial w_{ij}\partial \bar{w}_{i'j'}}.
\end{align*}

\end{enumerate}

\end{theorem}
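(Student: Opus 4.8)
The plan is to deduce both statements from a single principle: a Brownian motion projects to a Brownian motion under a Riemannian submersion with totally geodesic (hence minimal) fibers, since such a submersion intertwines the two Laplace--Beltrami operators via $\Delta_M(f\circ\pi)=(\Delta_B f)\circ\pi$ (the correction term being the mean curvature of the fibers, which vanishes). The process $(U_t)$ solving \eqref{eq-du-da} is, as already recalled, a Brownian motion on $\mathbf{U}(n)$ for the bi-invariant metric induced by $\langle\cdot,\cdot\rangle_{\mathfrak{u}(n)}$, with generator $\tfrac12\Delta_{\mathbf{U}(n)}$. I would then chain two submersions: first the map $q:\mathbf{U}(n)\to V_{n,k}$ sending a unitary matrix to its first $k$ columns $\begin{pmatrix}X\\Z\end{pmatrix}$, which realizes $V_{n,k}=\mathbf{U}(n)/\mathbf{U}(n-k)$, the stabilizer being $\left\{\begin{pmatrix}I_k&0\\0&g\end{pmatrix}:g\in\mathbf{U}(n-k)\right\}$; and second the Stiefel fibration $\pi:V_{n,k}\to G_{n,k}$ and its chart version $p:\widehat{V}_{n,k}\to\widehat{G}_{n,k}$ from Sections \ref{fibration}--\ref{inhomogeneous}, whose fibers are totally geodesic and isometric to $\mathbf{U}(k)$.

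For part (1), I would verify that $q$ is a Riemannian submersion for the embedding metric on $V_{n,k}$: at $U$ the vertical space is $U\cdot\mathfrak{k}$ with $\mathfrak{k}=\left\{\begin{pmatrix}0&0\\0&\epsilon\end{pmatrix}:\epsilon\in\mathfrak{u}(n-k)\right\}$, and on the horizontal complement $dq$ is, after the isometry given by left multiplication by $U$, the assignment of the first $k$ columns, which one checks is a linear isometry onto $T_{q(U)}V_{n,k}\subset\C^{n\times k}$ (up to fixing the normalization of $\langle\cdot,\cdot\rangle_{\mathfrak{u}(n)}$). The fibers, being cosets of the totally geodesic subgroup $\mathbf{U}(n-k)$, are totally geodesic, hence minimal, so the projection theorem applies and $q(U_t)=\begin{pmatrix}X_t\\Z_t\end{pmatrix}$ is a Brownian motion on $V_{n,k}$.

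For part (2), I would first settle non-attainability of the sub-manifold at infinity. Since $\begin{pmatrix}X_t\\Z_t\end{pmatrix}$ is a Brownian motion on the compact $V_{n,k}$, its further projection $\pi\begin{pmatrix}X_t\\Z_t\end{pmatrix}$ is, by the same principle applied to $\pi$, a Brownian motion on the compact $G_{n,k}$, defined for all $t\ge0$. The excluded locus $\Sigma=\{\det Z=0\}$ consists of those $k$-planes meeting the coordinate subspace $\C^{n-k}\times\{0\}$ nontrivially; it is a complex-analytic (Schubert) subvariety of complex codimension $\ge1$, so stratifying it into smooth pieces each has real codimension $\ge2$ and is therefore polar for Brownian motion. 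Starting from $\det Z_0\neq0$ one concludes $\mathbb{P}(\inf\{t>0:\det Z_t=0\}<+\infty)=0$, i.e. $(w_t)=(X_tZ_t^{-1})$ stays in the chart $\widehat{G}_{n,k}$ for all time; on that chart $w_t$ represents the $G_{n,k}$-Brownian motion, so $(w_t)$ is a Brownian motion on $\widehat{G}_{n,k}$ with generator $\tfrac12\Delta_{\widehat{G}_{n,k}}$.

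It then remains to put this generator in coordinates, which is where the real work lies. I would compute the metric that $p$ induces on $\widehat{G}_{n,k}=\C^{(n-k)\times k}$ by pushing the embedding metric forward along horizontal lifts, identifying it as the matrix analogue of the Fubini--Study metric whose inverse is proportional to $(I_{n-k}+ww^*)\otimes(I_k+w^*w)$. Because this metric is Kähler, the Laplace--Beltrami operator on functions has no first-order term and equals $2\,g^{i\bar\jmath}\,\partial_i\partial_{\bar\jmath}$ in holomorphic coordinates, yielding exactly the stated $\Delta_{\widehat{G}_{n,k}}$. Equivalently, one can apply Itô's formula to $w_t=X_tZ_t^{-1}$ using $\circ\,d(Z^{-1})=-Z^{-1}(\circ\,dZ)Z^{-1}$ together with the block equations coming from $dU_t=U_t\circ dA_t$, and read off the covariations $d\langle w_{ij},\bar{w}_{i'j'}\rangle$. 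I expect the main obstacle to be precisely this matrix computation: controlling the inverse $Z^{-1}$ and checking that, after summing over an orthonormal basis of $\mathfrak{u}(n)$, all drift contributions cancel --- the probabilistic incarnation of the Kähler property --- leaving only the second-order operator above. The non-attainability argument guarantees these manipulations are a.s. valid for all $t$.
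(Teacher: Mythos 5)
Your overall architecture is sound and, for part (1), coincides with the paper's: the paper simply observes that $\Pi:\mathbf{U}(n)\to V_{n,k}$ is a Riemannian submersion with totally geodesic fibers and projects the $\mathbf{U}(n)$-Brownian motion (your explicit identification of the vertical space $U\cdot\mathfrak{k}$ is the verification the paper leaves implicit, and you are right to flag the metric normalization, which the paper glosses over). For part (2) you genuinely diverge on the non-attainability step: the paper deduces $\mathbb{P}(\inf\{t>0:\det Z_t=0\}<\infty)=0$ from known properties of the matrix Jacobi process $Z_t^*Z_t$, whereas you argue that $\{\det Z=0\}$ is a Schubert subvariety of complex codimension one in the compact $G_{n,k}$, hence a countable union of smooth strata of real codimension $\ge 2$, hence polar for the $G_{n,k}$-Brownian motion. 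Your argument is self-contained and geometric where the paper's is a citation; both are valid.

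The one substantive gap is that the computation which actually produces the displayed formula for $\Delta_{\widehat{G}_{n,k}}$ is described but not executed. You offer two routes: identifying the induced metric as the matrix Fubini--Study metric with inverse proportional to $(I_{n-k}+ww^*)\otimes(I_k+w^*w)$ and invoking K\"ahlerness to kill the first-order terms, or a direct It\^o computation. The first route is legitimate in principle but merely relocates the work into verifying the form of the horizontal metric, which is a computation of the same order of difficulty. The paper takes the second route and the decisive intermediate step is the identity
\begin{align}
dw=d(X Z^{-1})=(Y\,d\gamma-wV\,d\gamma)\,Z^{-1},
\end{align}
obtained from the block Stratonovich system for $dU=U\circ dA$ after converting to It\^o form: this single formula shows at once that $w$ is a local martingale (so the drift cancellation you anticipate ``probabilistically encoding the K\"ahler property'' is automatic) and yields the covariations
\begin{align}
dw_{ij}\,d\bar w_{i'j'}=2\,(I_{n-k}+ww^*)_{ii'}\,(I_k+w^*w)_{j'j}\,dt
\end{align}
via $(ZZ^*)^{-1}=I_k+w^*w$ and $(Y-wV)(Y-wV)^*=I_{n-k}+ww^*$. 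Without carrying out this (or the equivalent metric) computation, the coordinate expression for the generator --- which is the real content of part (2) --- is asserted rather than proved. Everything else in your proposal, including the final identification of the diffusion as the $\widehat{G}_{n,k}$-Brownian motion via the totally geodesic submersion $p$, matches the paper.
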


\begin{proof}
The first part of the theorem is straightforward to prove. Indeed, the  map $\Pi: \mathbf{U}(n) \to V_{n,k}$, $\begin{pmatrix}  X & Y  \\ Z  & V  \end{pmatrix} \to \begin{pmatrix}  X   \\ Z    \end{pmatrix}  $ is a Riemannian submersion with totally geodesic fibers, therefore the process $(\Pi(U_t))_{t \ge 0}$ is a Brownian motion on $V_{n,k}$. We now turn to the proof of the second part of the theorem. We first note that, as noticed above,  $(Z^*_t Z_t)_{ t \ge 0}$ is a matrix Jacobi process and therefore, from known properties of those processes, $\mathbb{P} \left( \inf\{t>0, \det (Z^*_tZ_t)=0\} <+\infty \right)=0$. We then turn to the study of $w=XZ^{-1}$ which is therefore well defined for all times.  We need to introduce some notations. Let us consider the block decomposition
\[
A_t=\begin{pmatrix}  \alpha_t & \beta_t \\ \gamma_t & \ep_t \end{pmatrix},
\]
with $\alpha_t \in \mathbb{C}^{k \times k}$. Note that $\alpha_t$, $\beta_t=-\gamma_t^*$ and $\ep_t$ are independent. 
From \eqref{eq-du-da} we  obtain the following system of stochastic differential equations:
\begin{align}\label{eq-matrix-sde}
dX &=X \circ d\alpha +Y \circ d\gamma =X  d\alpha +Y d\gamma +\frac12 (dX  d\alpha +dY  d\gamma )\nonumber \\
dY &=X \circ d\beta +Y \circ d\ep =X  d\beta +Y  d\ep +\frac12 (dX  d\beta +dY  d\ep)\nonumber\\
dZ &= Z \circ d\alpha +V \circ d\gamma =Z  d\alpha +V d\gamma +\frac12 (dZ  d\alpha +dV  d\gamma )\\
dV &=Z \circ d\beta +V \circ d\ep =Z  d\beta +V  d\ep +\frac12 (dZ  d\beta +dV  d\ep ).\nonumber
\end{align}
Using then It\^o's formula, long but routine computations yield
\begin{align}\label{w dynamics}
dw=d(XZ^{-1}
)=(Y d\gamma -w V d\gamma) Z^{-1}.
\end{align}
From this expression, after further computations, we deduce
\begin{align*}
dw_{ij}d\bar{w}_{i'j'}&=\sum_{\ell,m=1}^k(Y-w V )_{i\ell}(\overline{Y}-\overline{w V} )_{i'm} (d\gamma Z^{-1})_{\ell j}(d\overline{\gamma} \overline{Z}^{-1})_{m j'} \\
 &=2 ((Y-w V )(Y-w V)^* )_{ii'}((ZZ^*)^{-1})_{j'j}dt \\
 &=2(I_{n-k}+ww^*)_{ii'}((ZZ^*)^{-1})_{j'j}dt \\
&=2(I_{n-k}+ww^*)_{ii'}(I_k+w^*w)_{j'j}dt.
\end{align*}
Therefore $(w_t)_{t \ge 0}$ is a diffusion with generator $\frac12\Delta_{\widehat{G}_{n,k}}$. Then, to conclude, we note that it is indeed the Brownian motion on $\widehat{G}_{n,k}$ because $p$ is a Riemannian submersion with totally geodesic fibers.
\end{proof}

\begin{remark}\label{invariant-w}
If we consider on $\mathbb{C}^{(n-k)\times k}$ the probability measure
\[
d\mu:=c_{n,k}\det(I_k+w^*w)^{-n} dw
\]
with normalizing constant $c_{n,k}$, then a direct computation shows that $\mu$ is the symmetric and invariant measure for the diffusion $(w_t)_{t \ge 0}$, i.e. if $f,g$ are smooth and compactly supported functions on $\mathbb{C}^{(n-k)\times k}$, then
\[
\int_{\mathbb{C}^{(n-k)\times k}} (\Delta_{\widehat{G}_{n,k}}f) g\, d\mu=\int_{\mathbb{C}^{(n-k)\times k}} f(\Delta_{\widehat{G}_{n,k}}g)\, d\mu
\]
\end{remark}

\begin{remark}
The complex Grassmannian $G_{n,k}$ is a compact irreducible  symmetric space of rank $k$ and the complex Stiefel manifold is a Riemannian homogeneous space (but is not symmetric). As such, the Brownian motions on $G_{n,k}$ and $V_{n,k}$ and their distributions  and pathwise properties can be studied using representation theory and stochastic differential geometry. The literature on those topics is nowadays quite substantial. We for instance refer to the early  works  by Eugene Dynkin \cite{Dyn61, MR0202206} and Paul $\&$ Marie-Paule Malliavin \cite{MR0359023} or more recent presentations like  \cite{MR3201989} and  the book \cite{MR2731662} (see in particular Chapter 8: Riemannian submersions and Symmetric spaces). In some sense, Theorem \ref{main:s1} provides a more pedestrian approach: We work in a specific choice of coordinates within the algebra of complex matrices and describe the $G_{n,k}$  and $V_{n,k}$ Brownian motions in those coordinates taking advantage of the  additional structure given by the matrix multiplication.
\end{remark}

\subsection{Eigenvalue process} 

In this section, for later use, we collect some properties of the eigenvalues of the process $(J_t)_{t \ge 0}: = (w^*_t w_t)_{t \ge 0}$  where $(w_t)_{t \ge 0}=(X_tZ_t^{-1})_{t \ge 0}$ is a Brownian motion on $\widehat{G}_{n,k}$ as in Theorem \ref{main:s1}. We note that
\[
J_t= w_t^* w_t=(Z_t^{-1})^* X_t^* X_tZ_t^{-1}=(Z_t Z_t^*)^{-1}-I_k
\]
Therefore $(I_k+J)^{-1}=ZZ^{*}$, and the properties of $J$ and its eigenvalues can be deduced from the corresponding properties of the matrix Jacobi process $ZZ^{*}$ after basic algebraic manipulations. In particular, one immediately has the following result.

\begin{lemma}\label{thm-J}

Let $(J_t)_{t\ge0}$ be given as above, then $\mathbb{P}( \inf\{ t>0, \det (J_t)=0\}<+\infty)=0$ and there exists a Brownian motion $(\bB_t)_{t \ge 0}$ in $\C^{k \times k}$ such that:
\begin{equation}\label{eq-dJ}
dJ=\sqrt{I_{k}+J} d\bB^*\sqrt{I_k+J} \sqrt{J}+\sqrt{J}\sqrt{I_k+J}d\bB \sqrt{I_{k}+J}+2(n-k+\tr(J))(I_k+J)dt
\end{equation} 
\end{lemma}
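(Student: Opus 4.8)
The plan is to first establish that $J$ almost surely never degenerates, and then to read off \eqref{eq-dJ} by applying It\^o's formula to $J_t=w_t^*w_t$, using the dynamics of $w$ already recorded in \eqref{w dynamics}. For the non-degeneracy, I would start from $J=(Z^{-1})^*X^*XZ^{-1}$, so that $\det J_t=\det(X_t^*X_t)/|\det Z_t|^2$. By Theorem \ref{main:s1} we already know $\det Z_t\neq0$ for all $t$ almost surely, so $\det J_t=0$ can only occur when $X_t^*X_t$ is singular. Since $U_t$ is unitary its first block column satisfies $X^*X+Z^*Z=I_k$, whence $X^*X=I_k-Z^*Z$. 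The involution $M\mapsto I_k-M$ carries the matrix Jacobi process $Z^*Z$ (drift $2kI_k-2nZ^*Z$) to a matrix Jacobi process for $X^*X$ (drift $2(n-k)I_k-2nX^*X$), i.e. the same family with $k$ and $n-k$ exchanged. By the boundary non-collision property of these processes — the same property invoked for $Z^*Z$ in Theorem \ref{main:s1}, see the cited references — the eigenvalues of $X^*X$ never reach $0$, so $\det(X_t^*X_t)\neq0$ for all $t$ almost surely. This yields $\mathbb{P}(\inf\{t>0,\ \det J_t=0\}<+\infty)=0$, and in particular $\sqrt{J_t}$ is invertible for all $t$.

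For the equation itself, since by \eqref{w dynamics} the process $w$ is driftless, the finite-variation part of $J=w^*w$ is entirely the It\^o correction $\int(dw)^*(dw)$. The $(a,b)$ entry of this correction is $\sum_i\overline{dw_{ia}}\,dw_{ib}$, and inserting the bracket $dw_{ij}\,d\bar w_{i'j'}=2(I_{n-k}+ww^*)_{ii'}(I_k+w^*w)_{j'j}\,dt$ computed in the proof of Theorem \ref{main:s1} gives, after summation, $((dw)^*(dw))_{ab}=2\big(\tr(I_{n-k}+ww^*)\big)(I_k+w^*w)_{ab}\,dt=2(n-k+\tr J)(I_k+J)_{ab}\,dt$, which is exactly the drift appearing in \eqref{eq-dJ}. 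It then remains only to identify the martingale part $w^*\,dw+(dw)^*\,w$ with the structured term of \eqref{eq-dJ}.

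To do this I would define the $\C^{k\times k}$-valued process $\bB$ by inverting the coefficients, namely through $\sqrt{J}\sqrt{I_k+J}\,d\bB\,\sqrt{I_k+J}:=w^*\,dw$, which is legitimate because $\sqrt{J}$ (non-degeneracy) and $\sqrt{I_k+J}$ are invertible; taking adjoints then automatically produces $\sqrt{I_k+J}\,d\bB^*\,\sqrt{I_k+J}\sqrt{J}=(dw)^*\,w$, so that $dJ$ acquires precisely the form \eqref{eq-dJ}. The only point left is that $\bB$ is a Brownian motion, which I would check by L\'evy's characterization: using that $d\gamma$, hence $dw$, has vanishing holomorphic brackets $dw_{ij}\,dw_{i'j'}=0$ together with the identity $(Y-wV)(Y-wV)^*=I_{n-k}+ww^*$, a computation of $\langle d\bB_{ij},\overline{d\bB_{kl}}\rangle$ and $\langle d\bB_{ij},d\bB_{kl}\rangle$ returns $\delta_{ik}\delta_{jl}\,dt$ and $0$ respectively, up to the normalization convention of the excerpt. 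Equivalently, one may obtain \eqref{eq-dJ} by writing $I_k+J=(ZZ^*)^{-1}$ and applying It\^o's formula for the matrix inverse to the matrix Jacobi process $ZZ^*$, which is the route suggested by the discussion preceding the statement.

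The main obstacle is precisely this martingale-part identification. The driving noise $\gamma$ is a rectangular $(n-k)\times k$ matrix, and one must verify that it collapses into a genuine square $k\times k$ Brownian motion $\bB$ carrying exactly the symmetric square-root coefficients $\sqrt{J}$ and $\sqrt{I_k+J}$. The subtlety is that the Hermitian relation $dJ^{\mathrm{mart}}=d\Xi+d\Xi^*$ does not determine $\Xi=\sqrt{J}\sqrt{I_k+J}\,d\bB\,\sqrt{I_k+J}$ from $J$ alone, so $\bB$ must be built from the underlying noise $w^*\,dw$ rather than reconstructed from $J$; the bookkeeping of the complex-Brownian-motion bracket conventions (the factor $2$) is where care is genuinely needed, whereas the drift computation and the non-degeneracy are comparatively routine.
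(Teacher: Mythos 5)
Your proposal is correct, and it is in fact more detailed than what the paper itself provides: the paper states the lemma as an immediate consequence of the identity $(I_k+J)^{-1}=ZZ^*$ together with the known SDE for the complex matrix Jacobi process $ZZ^*$, leaving the ``basic algebraic manipulations'' (It\^o's formula for the matrix inverse, and the eigenvalue non-attainment argument) unwritten. Your main route is slightly different --- you apply It\^o directly to $J=w^*w$ using the driftless dynamics \eqref{w dynamics} of $w$, compute the correction term from the bracket $dw_{ij}\,d\bar w_{i'j'}$, and then \emph{define} $\bB$ by inverting the coefficients and verify it is a Brownian motion via L\'evy's characterization (using $dw\,dw=0$) --- whereas the paper's intended derivation transports the Jacobi SDE for $ZZ^*$ through the inverse map. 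Both are sound; your version has the advantage of making the construction of $\bB$ and the role of the non-degeneracy of $J$ explicit, and your treatment of the degeneracy question (reducing $\det J_t=0$ to $\det(X_t^*X_t)=0$ and invoking boundary non-attainment for the Jacobi process $X^*X=I_k-Z^*Z$, which is where the hypothesis $k\le n-k$ enters) matches exactly the mechanism the paper alludes to in the remark following the lemma. The one place to be careful, which you correctly flag, is the normalization of the complex Brownian bracket: with the paper's convention the computation returns $d\bB_{ab}\,d\overline{\bB}_{a'b'}=2\,\delta_{aa'}\delta_{bb'}\,dt$, consistent with the factor $2$ in $dw_{ij}\,d\bar w_{i'j'}$, so $\bB$ is a Brownian motion in $\C^{k\times k}$ in the same sense as the $\bB$ appearing in the displayed SDE for $Z^*Z$ earlier in Section \ref{Sec2}.
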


We explicitly note that the proof of $\mathbb{P}( \inf\{ t>0, \det (J_t)=0\}<+\infty)=0$ uses the fact that $k \le n-k$.

\begin{remark}
The symmetric and invariant probability measure of the diffusion process $J$ can be easily obtained. Indeed, let $\mathbf{w}$ be a random variable on $\C^{(n-k)\times k}$ whose law is the probability measure with density $c_{n,k}\det(I_k+w^*w)^{-n} dw$.  From Proposition 1 in \cite{Forrester} one has for every bounded Borel function $g$ and some normalization constant $c'_{n,k}$ that
\begin{align*}
\mathbb{E} ( g (\mathbf{w}^* \mathbf{w}))&=c_{n,k} \int_{\C^{(n-k)\times k}} g (w^* w) \det (I_k +w^* w)^{-n} dw \\
 & =c'_{n,k} \int_{\Hh_k} g (S) \det (I_k +S)^{-n} \det(S)^{n-2k} dS.
\end{align*}
Thus, from Remark \ref{invariant-w}, the probability measure on the cone $\Hh_k$ of positive definite Hermitian matrices with density  $ c'_{n,k} \det (I_k +S)^{-n} \det(S)^{n-2k} dS$ is the invariant and symmetric probability measure for the diffusion process $J$.
\end{remark}

Concerning the eigenvalues of $J$ one obtains after applying  techniques and  results from \cite{ demni2010beta, doumerc2005matrices,Gra99,graczyk2013multidimensional} the following result:

\begin{lemma}\label{thm-lambda}
Let   $\lambda(t)=(\lambda_1(t),\dots,\lambda_k(t))$, $t\ge0$ be the eigenvalue process of the diffusion matrix $J$. Assume that $\lambda_1(0) > \cdots > \lambda_k(0)$, then the process $\lambda(t)$ is non colliding, i.e.
\[
\mathbb{P}\left( \forall \, t \ge 0, \lambda_1(t) > \cdots > \lambda_k(t) \right)=1.
\]
Moreover, we have
\begin{equation}\label{eq-lambda-sde}
d\lambda_i=2(1+\lambda_i)\sqrt{\lambda_i}dB^i+2(1+\lambda_i) \bigg(n-2k+1-(2k-3)\lambda_i+2\lambda_i (1+\lambda_i)\sum_{\ell\not=i} \frac{1}{\lambda_i-\lambda_\ell}\bigg)dt,
\end{equation}
where $(B_t)_{t \ge 0}$ is a Brownian motion in $\mathbb{R}^k$.
\end{lemma}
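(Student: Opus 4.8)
The plan is to derive the system \eqref{eq-lambda-sde} by applying the standard eigenvalue calculus for Hermitian matrix diffusions (in the spirit of the references \cite{demni2010beta, doumerc2005matrices, Gra99, graczyk2013multidimensional}) to the process $J$ of Lemma \ref{thm-J}, and then to establish non-collision by a Bessel comparison for the gaps $\lambda_i-\lambda_\ell$. Write $J_t=U_t\Lambda_tU_t^*$ with $U_t$ unitary and $\Lambda_t=\mathrm{diag}(\lambda_1(t),\dots,\lambda_k(t))$, a factorization that is smooth up to the first collision time $\tau=\inf\{t>0: \lambda_i(t)=\lambda_\ell(t)\ \text{for some}\ i\neq\ell\}$. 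Splitting the dynamics of Lemma \ref{thm-J} into its martingale part $dM=\sqrt{I_k+J}\,d\bB^*\sqrt{I_k+J}\sqrt J+\sqrt J\sqrt{I_k+J}\,d\bB\sqrt{I_k+J}$ and its drift $G=2(n-k+\tr J)(I_k+J)$, the perturbation formula for simple eigenvalues gives on $[0,\tau)$
\[
d\lambda_i=(U^*dM\,U)_{ii}+\Big((U^*GU)_{ii}+\sum_{\ell\neq i}\frac{d\langle(U^*MU)_{i\ell},(U^*MU)_{\ell i}\rangle}{\lambda_i-\lambda_\ell}\Big).
\]

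First I would pass to the moving eigenbasis. Since $\sqrt J$, $\sqrt{I_k+J}$ and $I_k+J$ are functions of $J$, conjugation by $U$ turns them into the diagonal matrices with entries $\sqrt{\lambda_i}$, $\sqrt{1+\lambda_i}$ and $1+\lambda_i$. The key structural fact is that the quadratic covariation of the matrix Brownian motion $\bB$ is invariant under the adapted unitary conjugation $\bB\mapsto U^*\bB U$, so with $\hat{\bB}=U^*\bB U$ the bracket relations $d\langle\bB_{ij},\overline{\bB_{i'j'}}\rangle=2\,\delta_{ii'}\delta_{jj'}\,dt$ and $d\langle\bB_{ij},\bB_{i'j'}\rangle=0$ (the normalization implicit in Theorem \ref{main:s1}) are preserved. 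A short computation then gives the diagonal martingale $(U^*dM\,U)_{ii}=2(1+\lambda_i)\sqrt{\lambda_i}\,\mathrm{Re}(d\hat{\bB}_{ii})$; setting $dB^i=\mathrm{Re}(d\hat{\bB}_{ii})$ and invoking Lévy's characterization (the brackets are $\delta_{ij}\,dt$) realizes $(B^1,\dots,B^k)$ as a Brownian motion in $\mathbb R^k$ and produces the diffusion term $2(1+\lambda_i)\sqrt{\lambda_i}\,dB^i$.

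For the drift I would combine the two contributions. The rotated drift is immediate, $(U^*GU)_{ii}=2(n-k+\tr J)(1+\lambda_i)$. For the repulsion, cross-multiplying the two martingale terms and using the bracket relations above yields $d\langle(U^*MU)_{i\ell},(U^*MU)_{\ell i}\rangle=2(1+\lambda_i)(1+\lambda_\ell)(\lambda_i+\lambda_\ell)\,dt$. The remaining step is an algebraic simplification: the identity $(1+\lambda_\ell)(\lambda_i+\lambda_\ell)=(\lambda_\ell-\lambda_i)(\lambda_\ell+1+2\lambda_i)+2\lambda_i(1+\lambda_i)$ gives
\[
\frac{(1+\lambda_\ell)(\lambda_i+\lambda_\ell)}{\lambda_i-\lambda_\ell}=-(\lambda_\ell+1+2\lambda_i)+\frac{2\lambda_i(1+\lambda_i)}{\lambda_i-\lambda_\ell}.
\]
Summing over $\ell\neq i$, the $\tr J$ produced by $\sum_{\ell\neq i}\lambda_\ell$ cancels the one in $(U^*GU)_{ii}$, and collecting the constant and the linear-in-$\lambda_i$ terms reproduces exactly $2(1+\lambda_i)\big(n-2k+1-(2k-3)\lambda_i+2\lambda_i(1+\lambda_i)\sum_{\ell\neq i}(\lambda_i-\lambda_\ell)^{-1}\big)$, which is \eqref{eq-lambda-sde}.

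The hard part is the non-collision statement, i.e.\ showing $\mathbb P(\tau<\infty)=0$ and thereby that the ordering persists. The mechanism is the singular repulsive drift: near a coincidence $\lambda_i\approx\lambda_\ell\approx\lambda$ the gap $D=\lambda_i-\lambda_\ell$ satisfies, to leading order, $dD\approx\sigma\,dW+\sigma^2 D^{-1}\,dt$ with $\sigma^2\approx 8\lambda(1+\lambda)^2$, which after a time change is precisely the equation of a Bessel process of dimension $3$ and hence a.s.\ never reaches $0$. To make this rigorous for the fully interacting system I would follow the McKean-type argument used for $\beta$-Jacobi and non-colliding diffusions in \cite{demni2010beta, doumerc2005matrices, Gra99, graczyk2013multidimensional}: apply It\^o's formula to $-\log\prod_{i<\ell}(\lambda_i-\lambda_\ell)$ (or to a suitable negative power of the discriminant), show its drift is bounded above so that it cannot explode in finite time, and conclude that the gaps stay strictly positive. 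One must also keep the eigenvalues off the other boundary $\det J=0$, i.e.\ $\lambda_k>0$, which is exactly the point where the hypothesis $k\le n-k$ enters, as already noted after Lemma \ref{thm-J}.
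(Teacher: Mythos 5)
Your proposal is correct: the paper gives no proof of this lemma (it simply invokes the techniques of the cited references), and your derivation --- the second-order perturbation formula for the eigenvalues of the Hermitian diffusion $J$, rotation of the noise into the moving eigenbasis, the algebraic identity collapsing the repulsion term, and the McKean-type non-collision argument via $-\log$ of the Vandermonde --- is exactly the standard route those references follow; I checked the bracket computations (with the normalization $d\langle \bB_{ij},\overline{\bB_{i'j'}}\rangle=2\delta_{ii'}\delta_{jj'}dt$ forced by the paper's formula for $dJ_{ij}dJ_{i'j'}$) and the drift algebra, and they do reproduce \eqref{eq-lambda-sde}. The only cosmetic point is that $\hat{\bB}$ should be defined as the stochastic integral $\int_0^t U_s^*\,d\bB_s\,U_s$ rather than literally $U_t^*\bB_t U_t$.
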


Under the assumptions of the previous theorem, let us denote $\rho_i=\frac{1-\lambda_i}{1+\lambda_i}$, $i=1,\dots, k$. Note that $\rho$ is the eigenvalue process of $2ZZ^*-I_k$. Then, as an application of It\^o's formula, we have
\begin{align*}
d\rho_i =-2\sqrt{1-\rho_i^2}dB^i-2\bigg((n-2k+(n-2k+2)\rho_i)+2\sum_{\ell\not=i}\frac{1-\rho_i^2}{\rho_\ell-\rho_i}\bigg)dt,
\end{align*}
where $(B_t)_{t \ge 0}$ is the same Brownian motion as in \eqref{eq-lambda-sde}. Therefore, $ \rho$ is a diffusion process  with generator given by

\[
\mathcal{L}_{n,k}=2\sum_{i=1}^k(1-\rho_i^2)\partial_i^2-2\sum_{i=1}^k\bigg(n-2k+(n-2k+2)\rho_i+2\sum_{\ell\not=i} \frac{1-\rho_i^2}{\rho_\ell-\rho_i}\bigg)\partial_i.
\]

If we consider the Vandermonde function
\[
h(\rho)=\prod_{i>j}(\rho_i-\rho_j),
\]
then we have for every smooth function $f$ on $[-1,1]^k$ that
\[
\mathcal{L}_{n,k}f=2 \left( \frac{1}{h}  \mathcal{G}_{n-2k, 0} (h f) +\frac{1}{6} k(k-1) \left( 3n-4k+2 \right)  f \right).
\]
where $\mathcal{G}_{a,b}:=\sum_{i=1}^k (1-\rho^2_i)\partial_i^2-(a-b+(a+b+2)\rho_i)\partial_i $ is a sum of Jacobi diffusion operators on $[-1,1]$.  Therefore $(\rho(t))_{t \ge 0}$ is  a Karlin-McGregor  diffusion associated to a $k$-dimensional Jacobi process with independent components and conditioned by its ground state. Using then the well-known  Karlin-McGregor  formula,  see \cite{interlacing}, one deduces that for every $t>0$, $\rho(t)$ has a density with respect to the Lebesgue measure $dx$ given by 
\[
e^{ \frac{1}{3} k(k-1) \left( 3n-4k+2 \right) t}  \frac{\prod_{i>j}(x_i-x_j)}{\prod_{i>j}(\rho_i(0)-\rho_j(0))} \mathrm{det} \left( p^{n-2k,0}_{t} (\rho_i(0),x_j)\right)_{1\le i, j \le k}  \, \mathbf{1}_{\Delta_k} (x),
\]
where $p^{n-2k,0}_{t}$ is the transition density of a one-dimensional Jacobi diffusion (see \eqref{eq-jacobi-kernel} for a precise formula) and 
\[
\Delta_k=\{ x \in [-1,1]^k, -1 \le x_1 <\cdots <x_k \le 1 \}. 
\]
 Moreover, when $t \to +\infty$, $\rho(t)$ converges in distribution to the invariant probability measure
\begin{align}\label{limit coulomb}
 d\nu=c_{n,k} \prod_{1 \le i < j \le k}(x_i-x_j)^2  \prod_{i=1}^k (1-x_i)^{n-2k}  \, \, \mathbf{1}_{\Delta_k} (x) dx,
\end{align}
where $c_{n,k}$ is again a normalization constant and it might   be explicitly computed using the well-known Selberg integral formula (see \cite{MR2760897}).

\section{Skew-product decomposition of the Brownian motion of the Stiefel fibration}

\subsection{Connection form and horizontal Brownian motion on $V_{n,k}$}

Let us consider the Stiefel fibration
\begin{equation}\label{eq-fibration2}
\mathbf{U}(k) \rightarrow\widehat{V}_{n,k} \rightarrow \widehat{G}_{n,k}
\end{equation}
that was described in Sections \ref{fibration} and \ref{inhomogeneous}. According to this fibration, one can see $\widehat{V}_{n,k}$ as a $\mathbf{U}(k)$-principal bundle over $\widehat{G}_{n,k}$. The next lemma gives a formula for the connection form of this bundle.

\begin{lemma}\label{connection form}
Consider on $\widehat{V}_{n,k}$
the $\mathfrak{u}(k)$-valued one form
\begin{align}\label{eq-contact-x}
\omega:=\frac{1}{2} \left( (X^* \,  Z^*)d\begin{pmatrix}  X \\ Z  \end{pmatrix}-d(X^* \, Z^*)\begin{pmatrix}  X \\ Z  \end{pmatrix}\right) =\frac{1}{2} \left(X^*dX-dX^*X +Z^*dZ-dZ^*Z\right).
\end{align}
Then, $\omega$ is the connection form of the bundle $\mathbf{U}(k) \rightarrow\widehat{V}_{n,k} \rightarrow \widehat{G}_{n,k}$.
\end{lemma}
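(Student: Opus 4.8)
The plan is to verify directly the two defining axioms of a principal connection form for the $\mathbf{U}(k)$-bundle in \eqref{eq-fibration2}: that $\omega$ reproduces the generators of the fundamental vector fields of the right $\mathbf{U}(k)$-action, and that it is $\mathrm{Ad}$-equivariant under that action. It is convenient first to rewrite \eqref{eq-contact-x} compactly. Writing $M=\begin{pmatrix} X \\ Z \end{pmatrix}\in\widehat{V}_{n,k}$, both expressions in \eqref{eq-contact-x} equal
\[
\omega=\tfrac12\left(M^*dM-dM^*M\right),
\]
since $M^*dM=X^*dX+Z^*dZ$ and $dM^*M=dX^*X+dZ^*Z$.

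The key computational input is the tangency relation for $V_{n,k}$. Differentiating the constraint $M^*M=I_k$ shows that any $\dot M\in T_MV_{n,k}$ satisfies $M^*\dot M+\dot M^*M=0$, so that $M^*\dot M$ is skew-Hermitian and $\dot M^*M=-M^*\dot M$. Substituting this into the formula above yields, on $T_MV_{n,k}$,
\[
\omega(\dot M)=\tfrac12\left(M^*\dot M-\dot M^*M\right)=M^*\dot M\in\mathfrak{u}(k),
\]
so $\omega$ is genuinely $\mathfrak{u}(k)$-valued. For the first axiom, the fundamental vector field attached to $\xi\in\mathfrak{u}(k)$ is $\xi^*_M=\frac{d}{dt}\big|_{t=0}M\exp(t\xi)=M\xi$, whence $\omega(\xi^*_M)=M^*M\xi=\xi$, as required. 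For the equivariance axiom I would pull back along the right translation $R_g(M)=Mg$, $g\in\mathbf{U}(k)$: since $(R_g)_*\dot M=\dot M g$ and $g^*=g^{-1}$, one finds
\[
(R_g^*\omega)_M(\dot M)=\omega_{Mg}(\dot M g)=(Mg)^*(\dot M g)=g^{-1}\,\omega_M(\dot M)\,g,
\]
which is exactly $R_g^*\omega=\mathrm{Ad}_{g^{-1}}\omega$.

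The remaining point, and the one requiring the most care, is to confirm that $\omega$ is the connection form \emph{adapted to the Riemannian submersion} $p$, i.e.\ that its horizontal distribution $\ker\omega$ coincides with the orthogonal complement of the vertical space, justifying the definite article in the statement. The main obstacle is to make this identification clean rather than merely dimensional. For $\dot M\in\ker\omega$ we have $M^*\dot M=0$, hence $\dot M^*M=0$; so for any vertical vector $M\xi$ the real inner product inherited from $\C^{n\times k}$ gives $\langle\dot M,M\xi\rangle=\mathrm{Re}\,\mathrm{tr}(\dot M^*M\xi)=0$, showing $\ker\omega$ is orthogonal to the vertical space. A dimension count, $\dim_{\mathbb R}\ker\omega=(2nk-k^2)-k^2=2k(n-k)=\dim_{\mathbb R}\widehat{G}_{n,k}$, then forces $\ker\omega$ to be exactly the horizontal distribution of the submersion. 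Together with the two axioms above, this establishes that $\omega$ is the connection form of the Stiefel fibration.
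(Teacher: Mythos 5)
Your proof is correct and follows essentially the same route as the paper's: both arguments reduce everything to the tangency identity $M^*\dot M+\dot M^*M=0$, check that $\omega$ returns $\xi$ on the generator $M\xi$ of the right $\mathbf{U}(k)$-action, and identify $\ker\omega$ with the orthogonal complement of the vertical space. The only cosmetic differences are that you close the kernel identification by an orthogonality-plus-dimension count where the paper computes the horizontal space explicitly as $\{A^*X+B^*Z=X^*A+Z^*B\}$, and you add the $\mathrm{Ad}$-equivariance check that the paper leaves implicit.
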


\begin{proof}
We first observe that if $v=\begin{pmatrix} X  \\ Z \end{pmatrix} \in V_{n,k}$, then the tangent space to $V_{n,k}$ at $v$ is given by
\[
T_v V_{n,k}= \left\{ \begin{pmatrix} A  \\ B \end{pmatrix} \in \C^{n \times k}, A^* X+X^* A+B^*Z +Z^*B=0  \right\}.
\]
Then, if $\theta \in \mathfrak{u}_k$, one easily computes that the generator of the one-parameter group $\{ q \to q e^{t \theta}\}_{t \in \mathbb{R}} $ is given by the vector field on $V_{n,k}$ whose value at $v$ is $\begin{pmatrix} X \theta  \\ Z \theta \end{pmatrix}$. Applying $\omega$ to this vector field yields $\theta$.   To show that $\omega$ is the connection form it remains therefore to prove that the kernel of $\omega$ is the horizontal space of the Riemannian submersion $\begin{pmatrix} X  \\ Z \end{pmatrix} \to XZ^{-1}$. This horizontal space at $v$, say $\mathcal{H}_v$, is the orthogonal complement of the vertical space at $v$, which is the subspace $\mathcal{V}_v$ of $T_v V_{n,k}$ tangent to the fiber of the submersion. The previous argument shows that
\[
\mathcal{V}_v = \left\{ \begin{pmatrix} X\theta  \\ Z\theta \end{pmatrix}, \theta \in \mathfrak{u}(k) \right\}.
\]
Therefore we have
\[
\mathcal{H}_v = \left\{ \begin{pmatrix} A  \\ B  \end{pmatrix}\in T_v V_{n,k}, \, \forall \,   \theta \in \mathfrak{u}(k), \, \mathrm{tr} \left( A^* X \theta +B^* Z \theta  \right)=0 \right\}.
\]
We deduce from this that
\[
\mathcal{H}_v = \left\{ \begin{pmatrix} A  \\ B  \end{pmatrix}\in T_v V_{n,k}, \,  A^* X  +B^* Z =X^* A +Z^*B \right\},
\]
from which it is clear that $\omega_{\mid \mathcal{H}}=0$.
\end{proof}

Our next goal is to describe the horizontal lift to $\widehat{V}_{n,k}$ of a Brownian motion on $\widehat{G}_{n,k}$. We still denote by $p:\widehat{V}_{n,k} \to \widehat{G}_{n,k}$ the Riemannian submersion. A continuous semimartingale $(M_t)_{t \ge 0}$ on $\widehat{V}_{n,k}$ is called \textit{horizontal} if for every $t\ge 0$,   $\int_{M[0,t]} \omega =0$, where $\int_{M[0,t]} \omega$ denotes   the Stratonovich line integral of $\omega$ along the paths of $M$. 
 If $(N_t)_{t \ge 0}$ is a continuous semimartingale on $\widehat{G}_{n,k}$ with $N_0 \in \widehat{G}_{n,k}$, then if $\widetilde{N}_0 \in\widehat{V}_{n,k}$ is such that $\ p(\widetilde{N}_0)=N_0$,  there exists a unique horizontal continuous semimartingale $(\widetilde{N}_t)_{t \ge 0}$ on $\widehat{V}_{n,k}$ such that $p(\widetilde{N}_t)=N_t$ for every $t \ge 0$. The semimartingale $(\widetilde{N}_t)_{t \ge 0}$ is then called the horizontal lift at $\widetilde{N}_0$ of $(N_t)_{t \ge 0}$ to $\widehat{V}_{n,k}$. We refer to \cite{MR3969194} or \cite{MR2731662} for a more general description of the horizontal lift of a semimartingale in the context of foliations.

 We then consider on $\widehat{G}_{n,k}$ the $\mathfrak{u}(k)$-valued one-form $\eta$ given  by
 \begin{align}\label{defeta}
 \eta:=& \frac12 \left( (I_k+w^* w)^{-1/2} (dw^* \, w-w^*dw)(I_k+w^* w)^{-1/2}  \right. \\
  & \left. - (I_k+w^* w)^{-1/2}\, d(I_k+w^* w)^{1/2}+d(I_k+w^* w)^{1/2} \, (I_k+w^* w)^{-1/2} \right). \notag
 \end{align}

\begin{theorem}\label{lift}
Let $(w_t)_{t \ge 0}$ be a Brownian motion on $\widehat{G}_{n,k}$ started at $w_0 \in \widehat{G}_{n,k}$ as in Theorem \ref{main:s1} and $\mathfrak{a}_t= \int_{w[0,t]} \eta$. Let $\begin{pmatrix} X_0  \\ Z_0 \end{pmatrix} \in\widehat{V}_{n,k}$ be such that $X_0Z_0^{-1}=w_0$. The process
\[
\widetilde{w}_t:=\begin{pmatrix} w_t  \\  I_k  \end{pmatrix}(I_k+w_t^*w_t)^{-1/2}\Theta_t
\]
is the horizontal lift at $\begin{pmatrix} X_0  \\ Z_0 \end{pmatrix}$ of $(w_t)_{t \ge 0}$ to $\widehat{V}_{n,k}$, where $(\Theta_t)_{t \ge 0}$ is the $\mathbf{U}(k)$-valued  solution of the Stratonovich stochastic differential equation
\begin{align*}
\begin{cases}
d\Theta_t = \circ d\mathfrak a_t \, \Theta_t \\
\Theta_0=(Z_0 Z^*_0)^{-1/2}Z_0.
\end{cases}
\end{align*}
\end{theorem}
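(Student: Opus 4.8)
The plan is to realize $\widetilde w_t$ as a gauge transform of a canonical section of the Stiefel fibration and to exploit the equivariance of the connection form $\omega$ under the right $\mathbf U(k)$-action. Writing $G_t=I_k+w_t^*w_t$, I would introduce the section
\[
s_t:=\begin{pmatrix} w_t \\ I_k\end{pmatrix}G_t^{-1/2},
\]
so that $\widetilde w_t=s_t\Theta_t$, and organize the proof into three checks: that $\widetilde w_t$ is a lift of $w_t$ landing in $\widehat V_{n,k}$, that it starts at $\binom{X_0}{Z_0}$, and that it is horizontal.

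The algebraic claims come first. Since $G_t^{-1/2}$ is Hermitian positive and $s_t^*s_t=G_t^{-1/2}(w_t^*w_t+I_k)G_t^{-1/2}=I_k$, the section lies in $V_{n,k}$; as $\Theta_t$ is unitary, $\widetilde w_t^*\widetilde w_t=\Theta_t^*(s_t^*s_t)\Theta_t=I_k$ and the bottom block $G_t^{-1/2}\Theta_t$ is invertible, so $\widetilde w_t\in\widehat V_{n,k}$ with $p(\widetilde w_t)=(w_tG_t^{-1/2}\Theta_t)(G_t^{-1/2}\Theta_t)^{-1}=w_t$. For the initial condition I would use $w_0=X_0Z_0^{-1}$ and $X_0^*X_0+Z_0^*Z_0=I_k$ to get $G_0=I_k+w_0^*w_0=(Z_0Z_0^*)^{-1}$, hence $G_0^{-1/2}=(Z_0Z_0^*)^{1/2}$; plugging in $\Theta_0=(Z_0Z_0^*)^{-1/2}Z_0$ then produces bottom block $(Z_0Z_0^*)^{1/2}(Z_0Z_0^*)^{-1/2}Z_0=Z_0$ and top block $w_0Z_0=X_0$.

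The heart of the argument is horizontality, $\int_{\widetilde w[0,t]}\omega=0$, and the step I expect to be the main obstacle is the identity that the connection form pulled back along the section is exactly $-\eta$. Computing $\omega(\circ ds_t)$ from the block data $X=wG^{-1/2}$, $Z=G^{-1/2}$ and using $Z^*=Z$, the terms containing $\circ dw$ assemble into $\tfrac12 G^{-1/2}(w^*\circ dw-\circ dw^* w)G^{-1/2}$, while the terms containing $\circ dG^{\pm 1/2}$, after substituting $w^*w=G-I_k$ and applying the matrix square-root differentiation rules $G^{1/2}\,\circ dG^{-1/2}=-\,\circ dG^{1/2}\,G^{-1/2}$ and $\circ dG^{-1/2}\,G^{1/2}=-G^{-1/2}\,\circ dG^{1/2}$, collapse to $\tfrac12(G^{-1/2}\circ dG^{1/2}-\circ dG^{1/2}\,G^{-1/2})$. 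Comparing with \eqref{defeta} this is precisely $-\eta$, so $\omega(\circ ds_t)=-\,\circ d\mathfrak a_t$ and $\int_{s[0,t]}\omega=-\mathfrak a_t$. These square-root manipulations are the fiddly part, but they are routine once the two differentiation rules are in hand.

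Finally I would assemble the pieces. Because $s_t\in V_{n,k}$ gives $s_t^*s_t=I_k$ and $\Theta_t$ is unitary, a direct Stratonovich Leibniz computation on $\widetilde w=s\Theta$ yields the gauge-transformation law
\[
\omega(\circ d\widetilde w_t)=\Theta_t^{-1}\,\omega(\circ ds_t)\,\Theta_t+\Theta_t^{-1}\circ d\Theta_t,
\]
where the last term arises because $\Theta^*\Theta=I_k$ turns $\tfrac12(\Theta^*\circ d\Theta-\circ d\Theta^*\,\Theta)$ into $\Theta^{-1}\circ d\Theta$. Substituting $\omega(\circ ds_t)=-\,\circ d\mathfrak a_t$ and the defining relation $\circ d\Theta_t=\circ d\mathfrak a_t\,\Theta_t$, the two contributions cancel and $\omega(\circ d\widetilde w_t)=0$, which is horizontality. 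I would also record that $\Theta_t$ genuinely stays in $\mathbf U(k)$: since $\eta$, and hence $\mathfrak a$, is $\mathfrak u(k)$-valued, one checks $\circ d(\Theta^*\Theta)=0$, and $\Theta_0$ is unitary, so $\Theta_t^*\Theta_t=I_k$ for all $t$; this both legitimizes the display above and is what guarantees $\widetilde w_t\in V_{n,k}$.
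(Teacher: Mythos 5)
Your proposal is correct and follows essentially the same route as the paper: both verify horizontality by computing the Stratonovich line integral of the connection form $\omega$ along $\widetilde w=s\Theta$ and cancelling against $\circ d\Theta=\circ d\mathfrak a\,\Theta$. Your factorization into the identity $\omega(\circ ds)=-\eta$ for the section plus the gauge-transformation law is just a cleaner bookkeeping of the single ``long but routine'' computation the paper performs, and the explicit checks of the initial condition and of $\Theta_t\in\mathbf U(k)$ are welcome details the paper leaves implicit.
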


\begin{proof}
As before we denote by $p$ the submersion $\begin{pmatrix} X  \\ Z \end{pmatrix} \to XZ^{-1}$. It is easy to check that for every $t \ge 0$, $p( \widetilde{w}_t)=w_t$ and that $\widetilde{w}_0=\begin{pmatrix} X_0  \\ Z_0 \end{pmatrix}$. It is therefore enough to prove that $\widetilde{w}$ is a horizontal semimartingale, i.e. that $\int_{\widetilde{w}[0,t]} \omega=0$. Denote
\[
X_t= w_t(I_k+w_t^*w_t)^{-1/2}\Theta_t, \, \, Z_t=(I_k+w_t^*w_t)^{-1/2}\Theta_t
\]
A long, but routine, computation shows that
\begin{align*}
&\frac{1}{2} \left(X^*\circ dX-\circ dX^*X +Z^*\circ dZ-\circ dZ^*Z\right)\\
=&-\frac{1}{2} \left( \circ d\Theta^*\Theta-\Theta^*\circ d\Theta+\Theta^*\bigg(\circ d(I_k+J)^{-1/2}\,(I_k+J)^{1/2}-(I_k+J)^{1/2}\circ d(I_k+J)^{-1/2}\bigg)\Theta \right. \\
&+\left. \Theta^*(I_k+J)^{-1/2}(\circ dw^*w-w^*\circ dw)(I_k+J)^{-1/2}\Theta \right).
\end{align*}
where $J =w^* w$. Since $\circ d\Theta^*=\circ d\Theta^{-1}=-\Theta^{-1}\circ d\Theta\, \Theta^{-1}$ and $\circ d\Theta = \circ d\mathfrak a \, \Theta $ with

\begin{align*}
\circ d \mathfrak a=& \frac12(I_k+J)^{-1/2} (\circ dw^* \, w-w^*\circ dw)(I_k+J)^{-1/2} \\
 & -\frac{1}{2} \left( (I_k+J)^{-1/2}\, \circ d(I_k+J)^{1/2}-\circ d(I_k+J)^{1/2} \, (I_k+J)^{-1/2}\right)
 \end{align*}
 we conclude that 
 \[
 \frac{1}{2} \left(X^*\circ dX-\circ dX^*X +Z^*\circ dZ-\circ dZ^*Z\right)=0
 \]
 and thus $\int_{\widetilde{w}[0,t]} \omega=0$.
\end{proof}
\subsection{Skew-product decomposition of the Stiefel Brownian motion}

We now turn to the description of the Brownian motion on $\widehat{V}_{n,k}$ as a skew-product.

\begin{theorem}\label{skew}
Let $(w_t)_{t \ge 0}$ be a Brownian motion on $\widehat{G}_{n,k}$ started at $w_0=X_0Z_0^{-1} \in \widehat{G}_{n,k}$ as in Theorem \ref{main:s1} and let $(\Omega_t)_{t \ge 0}$ be a Brownian motion on the unitary group $\mathbf{U}(k)$ independent from $(w_t)_{t \ge 0}$.  Let $(\Theta_t)_{t \ge 0}$ be the $\mathbf{U}(k)$-valued  solution of the Stratonovich stochastic differential equation
\begin{align*}
\begin{cases}
d\Theta_t = \circ d\mathfrak a_t \, \Theta_t \\
\Theta_0=(Z_0 Z^*_0)^{-1/2}Z_0,
\end{cases}
\end{align*}
where  $\mathfrak{a}_t= \int_{w[0,t]} \eta$.
The process $$\begin{pmatrix} w_t  \\  I_k  \end{pmatrix}(I_k+w_t^*w_t)^{-1/2}\Theta_t \,  \Omega_t$$
is a Brownian motion on $\widehat{V}_{n,k}$ started at $\begin{pmatrix} X_0  \\ Z_0 \end{pmatrix}$.
\end{theorem}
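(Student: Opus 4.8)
The plan is to recognize the stated process as the canonical skew-product attached to the principal bundle $\mathbf{U}(k)\to\widehat{V}_{n,k}\to\widehat{G}_{n,k}$ and to verify that its infinitesimal generator is the full Laplacian $\tfrac12\Delta_{V_{n,k}}$. Write $\widetilde{w}_t:=\begin{pmatrix} w_t \\ I_k \end{pmatrix}(I_k+w_t^*w_t)^{-1/2}\Theta_t$, so that the process in question is $M_t=\widetilde{w}_t\,\Omega_t$; taking the $\mathbf{U}(k)$-valued Brownian motion $\Omega_t$ started from the identity (so $\Omega_0=I_k$) and using $\widetilde{w}_0=\begin{pmatrix} X_0 \\ Z_0 \end{pmatrix}$ from Theorem \ref{lift}, we get the correct starting point $M_0=\begin{pmatrix} X_0 \\ Z_0 \end{pmatrix}$. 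The key structural input is the decomposition, valid for a Riemannian submersion with totally geodesic (hence minimal) fibers, of the Laplace--Beltrami operator with no cross terms: since $p:\widehat{V}_{n,k}\to\widehat{G}_{n,k}$ has totally geodesic fibers isometric to $\mathbf{U}(k)$ (Section \ref{fibration}), one has $\Delta_{V_{n,k}}=\Delta^H+\Delta^V$, where $\Delta^H$ is the horizontal lift of $\Delta_{\widehat{G}_{n,k}}$ and $\Delta^V$ is the fibrewise Laplacian. Because all fibers are isometric copies of the \emph{fixed} group $\mathbf{U}(k)$ and the action is isometric, $\Delta^V$ coincides along each fiber with the bi-invariant Laplacian of $\mathbf{U}(k)$ and carries no base-dependent clock.

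First I would identify the horizontal contribution. By Theorem \ref{lift}, $\widetilde{w}_t$ is the horizontal lift of the Brownian motion $w_t$ of Theorem \ref{main:s1}; since a horizontal lift projects onto $w_t$ and has purely horizontal generator, it is a diffusion with generator $\tfrac12\Delta^H$ (the horizontal lift of $\tfrac12\Delta_{\widehat{G}_{n,k}}$), the absence of drift being guaranteed by the totally geodesic property of the fibers. Next I would identify the vertical contribution produced by right multiplication by $\Omega_t$. The fundamental vector fields of the right $\mathbf{U}(k)$-action are exactly the vertical vectors $\begin{pmatrix} X\theta \\ Z\theta \end{pmatrix}$, $\theta\in\mathfrak{u}(k)$, already appearing in the proof of Lemma \ref{connection form}, and because this action is isometric and the connection form $\omega$ is equivariant ($R_g^*\omega=\mathrm{Ad}_{g^{-1}}\omega$), right multiplication maps horizontal vectors to horizontal vectors and preserves each fiber's intrinsic geometry. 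Hence feeding $\Omega_t$ into the right action contributes precisely the vertical Laplacian $\tfrac12\Delta^V$ without disturbing the horizontal structure carried by $\widetilde{w}_t$.

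It then remains to combine the two pieces. Since $\Omega_t$ is independent of $w_t$, and $\widetilde{w}_t$ is entirely $w$-measurable, the quadratic covariation between the horizontal and the vertical driving noises vanishes. Applying Itô's formula to $f(\widetilde{w}_t\,\Omega_t)$ for $f\in C^\infty(\widehat{V}_{n,k})$ via the product rule $\circ dM_t=(\circ d\widetilde{w}_t)\,\Omega_t+\widetilde{w}_t\,(\circ d\Omega_t)$, together with the equivariance above, the second-order part separates into a horizontal piece governed by $\Delta^H$ and a vertical piece governed by $\Delta^V$, with the mixed terms absent. By the submersion decomposition this sum is exactly $\tfrac12\Delta_{V_{n,k}}$, so $f(M_t)-\tfrac12\int_0^t\Delta_{V_{n,k}}f(M_s)\,ds$ is a local martingale for every such $f$, which characterizes $M_t$ as a Brownian motion on $\widehat{V}_{n,k}$ started at $\begin{pmatrix} X_0 \\ Z_0 \end{pmatrix}$.

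I expect the main obstacle to be the rigorous verification of this additivity of generators, i.e.\ that no extra drift or horizontal-vertical cross term survives when the vertical Brownian motion $\Omega_t$ is applied along the \emph{moving} horizontal frame $\widetilde{w}_t$. This is precisely where both hypotheses are essential: the totally geodesic property kills the drift coming from the fibers' second fundamental form (which would otherwise appear in the vertical part), while the equivariance of the connection guarantees that transporting the vertical Laplacian by the horizontal motion produces no cross terms; the independence of $\Omega$ and $w$ is what keeps the two stochastic integrals orthogonal so that these geometric cancellations are not spoiled by quadratic covariation. A fully explicit alternative would compute the Stratonovich SDE for $M_t$ directly from the product rule and the SDEs for $\widetilde{w}$ and $\Omega$, matching it termwise with the defining equation of Brownian motion on $V_{n,k}$; this is elementary but lengthy, and the generator-based argument isolates the geometric content more transparently.
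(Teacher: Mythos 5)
Your proposal is correct and follows essentially the same route as the paper: decompose $\Delta_{\widehat{V}_{n,k}}=\Delta_{\mathcal H}+\Delta_{\mathcal V}$ using the totally geodesic fibers, identify $\widetilde w_t$ as the horizontal diffusion with generator $\tfrac12\Delta_{\mathcal H}$ via Theorem \ref{lift}, and realize the vertical part by the right action of the independent $\mathbf U(k)$-Brownian motion. The only (cosmetic) difference is the final assembly: the paper concludes via the commuting semigroups $e^{\frac12 t\Delta_{\mathcal V}}e^{\frac12 t\Delta_{\mathcal H}}=e^{\frac12 t\Delta_{\widehat V_{n,k}}}$, whereas you add the generators directly through It\^o's formula and the equivariance of the connection, which amounts to the same geometric content.
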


\begin{proof}
We denote by $\Delta_{\mathcal{H}}$ the horizontal Laplacian and by $\Delta_{\mathcal{V}}$ the vertical Laplacian of the Stiefel fibration; see \cite{BIHP} for the definitions of horizontal and vertical Laplacians. Since the submersion $\widehat{V}_{n,k} \to \widehat{G}_{n,k}$ is totally geodesic, the operators $\Delta_{\mathcal{H}}$ and $\Delta_{\mathcal{V}}$ commute (see \cite{BIHP}). We note that the Laplace-Beltrami operator of $\widehat{V}_{n,k}$ is given by $\Delta_{\widehat{V}_{n,k}} = \Delta_{\mathcal{H}}+\Delta_{\mathcal{V}}$ and that the horizontal lift of the Brownian motion on $\widehat G_{n,k}$ is a diffusion with generator $\frac{1}{2} \Delta_{\mathcal{H}}$, see \cite{MR3969194}. The fibers of the submersion $\widehat{V}_{n,k} \to \widehat G_{n,k}$ are isometric to $\mathbf{U}(k)$, thus if $f$ is a bounded Borel function on $\widehat{V}_{n,k}$, one has
\[
e^{\frac{1}{2} t \Delta_{\mathcal{V}}}f \begin{pmatrix} X  \\ Z \end{pmatrix}=  \mathbb{E} \left ( f \begin{pmatrix} X \Omega_t  \\ Z \Omega_t \end{pmatrix}   \right).
\]
Since $e^{\frac{1}{2} t \Delta_{\mathcal{V}}}e^{\frac{1}{2} t \Delta_{\mathcal{H}}}= e^{\frac{1}{2} t \Delta_{\widehat{V}_{n,k} }}$, we conclude from Theorem \ref{lift}.
 \end{proof}

\section{Limit theorems}

Throughout the section, let $(w_t)_{t \ge 0}=(X_tZ_t^{-1})_{t \ge 0}$ be a Brownian motion on $\widehat{G}_{n,k}$ where $\begin{pmatrix} X_t  \\ Z_t \end{pmatrix}_{t \ge 0}$ is a Brownian motion on $\widehat{V}_{n,k}$ . Our goal is to prove Theorem \ref{winding intro}. Without loss of generality we will assume throughout the section that the eigenvalues of $Z_0^*Z_0$ are distinct; Even  if the eigenvalues of the complex Jacobi process $Z_t^*Z_t$ are not distinct for $t=0$, they will be distinct for any $t>0$, see \cite{MR3842169,demni2010beta}, and thus from the Markov property, the limit   Theorem \ref{winding intro} still holds.

\subsection{Main limit theorem}

We first  give a limit theorem for the process $\left(\int_0^t \tr \left( w^*_s w_s) ds \right) \right)_{ t \ge 0}$ that shall be used in the next subsections. The method we use, a Girsanov transform, takes its root in the paper by M. Yor \cite{Yor1} and was further developed in the situation of matrix Wishart diffusions in \cite{doumerc1} and in the situation of the real Jacobi matrix processes in Section 9.4.2 of the thesis \cite{doumerc2005matrices}.  Our result is the following:

\begin{theorem}\label{limit_oi}
Let $(J_t)_{t \ge 0}=(w_t^* w_t)_{t \ge 0}$. The following convergence holds in distribution when $t \to +\infty$
\[
\frac{1}{t^2} \int_0^t  \tr(J)ds \to X,
\]
where $X$ is a random variable on $[0,+\infty)$ with density $\frac{k(n-k)}{\sqrt{2\pi}  x^{3/2} }e^{-\frac{k^2(n-k)^2}{2x}}$ (therefore $X$ is the inverse of a gamma distributed random variable).
\end{theorem}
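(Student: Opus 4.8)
The plan is to identify the limit through its Laplace transform. Writing $A_t=\int_0^t\tr(J_s)\,ds$, it suffices by the continuity theorem for positive random variables to show that for every $\mu>0$
\[
\mathbb{E}\Big[\exp\Big(-\tfrac{\mu}{t^2}A_t\Big)\Big]\longrightarrow e^{-k(n-k)\sqrt{2\mu}}\qquad(t\to+\infty),
\]
since $e^{-c\sqrt{2\mu}}$ is exactly the Laplace transform of the density $\tfrac{c}{\sqrt{2\pi}}x^{-3/2}e^{-c^2/(2x)}$ with $c=k(n-k)$; this is the one-sided stable law of index $1/2$ (a hitting-time/L\'evy law), equivalently the law of $c^2/N^2$ for $N$ a standard Gaussian, hence an inverse-gamma variable as stated. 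Thus the whole problem reduces to the large-time behaviour of the Laplace transform of the additive functional $A_t$ at the vanishing parameter $\mu/t^2$.

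First I would pass to the eigenvalues: by Lemma \ref{thm-J} and Lemma \ref{thm-lambda} the integrand is a functional of the eigenvalue diffusion alone, $\tr(J_s)=\sum_{i=1}^k\lambda_i(s)=\sum_{i=1}^k\frac{1-\rho_i(s)}{1+\rho_i(s)}$, governed by the Karlin--McGregor (ground-state transformed) generator $\mathcal{L}_{n,k}$. The decisive qualitative feature is that $\tr(J)$ is \emph{not} integrable against the invariant measure $\nu$ of \eqref{limit coulomb}: the potential blows up precisely on the boundary face where some $\rho_i\to-1$, and a tail computation (elementary for $k=1$, where the normalized density $\propto(1-\rho)^{n-2}$ gives $\nu(\lambda>v)\sim(n-1)/v$, and via the Selberg normalization for $k>1$) yields the critical estimate $\nu(\tr(J)>v)\sim k(n-k)/v$. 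It is this $1/v$ tail constant, rather than any purely local boundary quantity, that fixes the value $k(n-k)=\dim_{\mathbb C}G_{n,k}$ and forces a stable limit of index $1/2$ with the anomalous $t^2$ normalization.

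To convert this into the exact transform I would use Yor's method through a Girsanov transform, as in \cite{Yor1,doumerc2005matrices,doumerc1}. By Feynman--Kac, $\mathbb{E}[\exp(-\mu A_t)]$ is driven by the Schr\"odinger-type semigroup generated by $\tfrac12\mathcal{L}_{n,k}-\mu\,\tr(J)$. A Girsanov/Doob $h$-transform removes the potential by tilting the drift: one rewrites the exponential functional as an expectation under a measure under which the eigenvalue process is again a Jacobi-type diffusion with shifted parameters, the Radon--Nikodym correction being a ratio of ground states controlled by the explicit invariant density and a Selberg integral. Sending $t\to+\infty$ under the tilted measure, the process equilibrates and one reads off an exponential rate $\Lambda(\mu)=\inf\mathrm{spec}\big(-\tfrac12\mathcal{L}_{n,k}+\mu\,\tr(J)\big)$; the non-integrability of $\tr(J)$ makes this rate non-analytic, $\Lambda(\mu)\sim k(n-k)\sqrt{2\mu}$ as $\mu\to0^+$, so that after the scaling $\mu\mapsto\mu/t^2$ the product $\Lambda(\mu/t^2)\,t\to k(n-k)\sqrt{2\mu}$ produces the claimed transform. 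Near the boundary the relevant one-dimensional profile is of Bessel/squared-Bessel type, which is the source of the square-root (hitting-time) dependence $\sqrt{2\mu}$.

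The hard part will be the rigorous justification of this \emph{joint} limit: the parameter $\mu/t^2$ is sent to $0$ at the same time as $t\to+\infty$, so one cannot invoke a fixed spectral gap, and the non-integrability of the potential rules out naive perturbation theory (the would-be linear-in-$\mu$ term $\mu\,\mathbb{E}_\nu[\tr J]$ is infinite, which is exactly why the answer is $\sqrt{\mu}$ rather than $\mu$). Establishing the $\Lambda(\mu)\sim k(n-k)\sqrt{2\mu}$ asymptotics and the exchange of limits uniformly in the tilting is the main obstacle, and it is here that the explicit Girsanov density and the exact Jacobi/Karlin--McGregor transition kernel of Section \ref{Sec2} are essential. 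The secondary difficulty is the genuinely multidimensional case $k>1$: the Vandermonde interaction in \eqref{eq-lambda-sde} couples the eigenvalues, so one must check that the $k$ boundary faces contribute additively to the tail constant (giving the factor $k\times(n-k)$) and that the off-diagonal drift does not perturb the leading asymptotics.
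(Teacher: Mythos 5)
Your strategy is the paper's strategy: reduce to the Laplace transform $\E[e^{-\mu t^{-2}\int_0^t\tr(J)ds}]\to e^{-k(n-k)\sqrt{2\mu}}$, remove the potential $\tr(J)$ by a Yor-type Girsanov tilt under which the eigenvalues become a non-colliding Jacobi process with shifted parameters, and read off the rate from the equilibration of the tilted process. The identification of the limit law as the stable$(1/2)$/hitting-time law and the reduction to the eigenvalue diffusion are correct. But as written the proposal has a genuine gap at exactly the point you flag as ``the main obstacle'': you posit an exponential rate $\Lambda(\mu)=\inf\mathrm{spec}(-\tfrac12\mathcal{L}_{n,k}+\mu\,\tr(J))$ and only \emph{assert} the non-analytic asymptotics $\Lambda(\mu)\sim k(n-k)\sqrt{2\mu}$, without producing the ground state or a mechanism to prove this. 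The missing idea that makes the paper's argument work is that no asymptotic spectral analysis is needed: It\^o's formula gives $d\log\det(I_k+J)=\tr(\sqrt{J}(d\bB+d\bB^*))+2k(n-k)\,dt$ (Lemma \ref{thm-J2}), i.e.\ the drift is the \emph{constant} $2k(n-k)$ and the quadratic variation is $4\tr(J)\,dt$; hence $\det(I_k+J)^{-\alpha}$ is an exact eigenfunction and
\[
M_t^{\alpha}=e^{2k(n-k)\alpha t}\Bigl(\tfrac{\det(I_k+J_0)}{\det(I_k+J_t)}\Bigr)^{\alpha}\exp\Bigl(-2\alpha^2\textstyle\int_0^t\tr(J)ds\Bigr)
\]
is an exact bounded martingale (Lemma \ref{martingale g}). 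This makes $\Lambda(2\alpha^2)=2k(n-k)\alpha$ an identity, not an asymptotic, and is where the square root really comes from (quadratic Girsanov exponent versus linear drift correction), rather than from a Bessel profile near the boundary.

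The second unproven step is the joint limit itself: after the tilt one must show $\E^{\alpha}[(\det(I_k+J_t)/\det(I_k+J_0))^{\alpha}]\to 1$ as $t\to\infty$ with $\alpha\asymp\sqrt{\mu}/t\to 0$ simultaneously. The paper does this not by abstract uniform-in-$\alpha$ ergodicity but by the explicit Karlin--McGregor/Jacobi-polynomial expansion of the tilted kernel $p^{n-2k,2\alpha}_t$ (Lemma \ref{kernel J}): the leading term $(m_1,\dots,m_k)=(0,1,\dots,k-1)$ carries the rate $e^{-\frac13k(k-1)(3n-4k+6\alpha+2)t}$, which exactly cancels the prefactor, and the residual $\alpha$-dependence vanishes. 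Your proposal points at these kernels but does not carry out the cancellation, so the claimed limit $e^{-k(n-k)\sqrt{2\mu}}$ is not actually established. Finally, the tail heuristic $\nu(\tr(J)>v)\sim k(n-k)/v$ is suggestive but not load-bearing and is only verified for $k=1$; for $k>1$ the constant is asserted via an unperformed Selberg computation, and in any case an index-one tail of the potential does not by itself force a stable$(1/2)$ limit with $t^2$ normalization, so it cannot substitute for the exact martingale computation.
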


The proof is rather long and will be decomposed in several steps. We first recall that from Lemma \ref{thm-J}, there exists a Brownian motion $(\bB_t)_{t \ge 0}$ in $\C^{k \times k}$ such that:
\begin{equation}\label{eq-dJ}
dJ=\sqrt{I_{k}+J} d\bB^*\sqrt{I_k+J} \sqrt{J}+\sqrt{J}\sqrt{I_k+J}d\bB \sqrt{I_{k}+J}+2(n-k+\tr(J))(I_k+J)dt
\end{equation}

\begin{lemma}\label{thm-J2}
We have 
\begin{equation}\label{eq-det-I+J}
d(\det(I_k+J))=\det(I_k+J)\tr\left( \sqrt{J}(d\bB+d\bB^*) \right)+2\det(I_k+J)\bigg(k(n-k)+ \tr(J) \bigg)dt,
\end{equation}
and therefore 
\begin{equation}\label{eq-logdet-I+J}
d(\log\det(I_k+J))=\tr\left(\sqrt{J}(d\bB+d\bB^*) \right)+2k(n-k)dt.
\end{equation}
\end{lemma}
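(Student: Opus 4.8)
The plan is to apply the matrix It\^o formula to the smooth function $M \mapsto \det M$ along $M_t = I_k + J_t$, and then to obtain \eqref{eq-logdet-I+J} by a further application of It\^o's formula to $\log$. The two derivative identities I will use are $\partial_{M_{ij}} \det M = \det(M)(M^{-1})_{ji}$ and, differentiating once more together with $\partial_{M_{k\ell}}(M^{-1})_{ji} = -(M^{-1})_{jk}(M^{-1})_{\ell i}$, the Hessian $\partial^2_{M_{ij} M_{k\ell}} \det M = \det(M)[(M^{-1})_{ji}(M^{-1})_{\ell k} - (M^{-1})_{jk}(M^{-1})_{\ell i}]$. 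With $M = I_k + J$ and $dM = dJ$ given by \eqref{eq-dJ}, It\^o's formula expresses $d\det(I_k+J)$ as a first-order term $\det(I_k+J)\tr((I_k+J)^{-1} dJ)$ plus a second-order term contracting the Hessian against the covariations $d\langle J_{ij}, J_{k\ell}\rangle$.

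First I would handle the first-order term. Writing $P = \sqrt{I_k+J}$, $Q = \sqrt{J}$ and $R = PQ$, the martingale part of $dJ$ in \eqref{eq-dJ} is $P\, d\bB^* R + R\, d\bB\, P$; since $P, Q, R$ are all functions of $J$ and hence commute, the cancellations $RP^{-1} = Q$ and $P^{-1}R = Q$ reduce the martingale part of $\tr((I_k+J)^{-1} dJ)$ to $\tr(\sqrt{J}(d\bB + d\bB^*))$, precisely the martingale in \eqref{eq-det-I+J}. The drift of $dJ$ is $2(n-k+\tr J)(I_k+J)\,dt$, so its first-order contribution is $2\det(I_k+J)(k(n-k) + k\tr J)\,dt$. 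This provisional drift over-counts the $\tr J$ term by a factor $k$, and the discrepancy must be repaired by the second-order term.

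The heart of the computation, and the step I expect to be the main obstacle, is this second-order term. I would first compute the covariance tensor of $J$: from $P\,d\bB^* R + R\, d\bB\, P$ and the complex Brownian motion convention $d\bB_{ab}\,d\bar\bB_{cd} = 2\,\delta_{ac}\delta_{bd}\,dt$, $d\bB_{ab}\,d\bB_{cd} = 0$, one obtains $d\langle J_{ij}, J_{k\ell}\rangle = 2[(R^2)_{i\ell}(P^2)_{kj} + (P^2)_{i\ell}(R^2)_{kj}]\,dt$, where $R^2 = J(I_k+J)$ and $P^2 = I_k+J$. Contracting against the two Hessian pieces gives four terms. The two coming from $(M^{-1})_{ji}(M^{-1})_{\ell k}$ close into trace loops reducing to $\tr(R^2 P^{-2})$, which collapses via the key algebraic identity $R^2(I_k+J)^{-1} = J(I_k+J)(I_k+J)^{-1} = J$ to $\tr J$, contributing $+2\det(I_k+J)\tr J\, dt$ in total; the two coming from $-(M^{-1})_{jk}(M^{-1})_{\ell i}$ factor as products $\tr(P^{-2}P^2)\,\tr(P^{-2}R^2) = k\,\tr J$, contributing $-2k\det(I_k+J)\tr J\, dt$. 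The net It\^o correction is therefore $2(1-k)\det(I_k+J)\tr J\, dt$, which combines with the provisional drift to yield exactly $2\det(I_k+J)(k(n-k) + \tr J)\, dt$, proving \eqref{eq-det-I+J}.

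Finally, \eqref{eq-logdet-I+J} follows by applying It\^o to $\log$: from \eqref{eq-det-I+J} the martingale $\tr(\sqrt{J}(d\bB + d\bB^*))$ has quadratic variation $4\,\tr J\, dt$ (again using $d\bB\,d\bar\bB = 2\,dt$ and $\tr Q^2 = \tr J$), so the correction $-\tfrac12 \cdot 4\,\tr J\, dt = -2\,\tr J\, dt$ cancels the $\tr J$ in the drift of $\log\det(I_k+J)$, leaving the constant drift $2k(n-k)\,dt$. As an independent check on the drift, summing the one-dimensional It\^o formula for $\log(1+\lambda_i)$ over the eigenvalue system \eqref{eq-lambda-sde} also gives $2k(n-k)$, the $\tr J$ contributions cancelling through the Vandermonde interaction sum.
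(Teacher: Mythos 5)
Your proposal is correct and follows essentially the same route as the paper: It\^o's formula applied to $\det(I_k+J)$ with the standard first- and second-order derivative identities for the determinant, contraction of the Hessian against the covariation of $J$ computed from \eqref{eq-dJ}, and a final application of It\^o's formula to the logarithm. All intermediate quantities match the paper's computation --- the provisional drift $2k(n-k)+2k\,\tr(J)$, the net It\^o correction $2(1-k)\tr(J)$, and the quadratic variation $4\tr(J)\,dt$ --- and your eigenvalue cross-check via \eqref{eq-lambda-sde} is a valid (if unnecessary) confirmation.
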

\begin{proof}
By It\^o's formula we have
\[
d(\det(I_k+J))=\sum_{i,j=1}^k\frac{\partial \det(I_k+J)}{\partial J_{ij}}dJ_{ij}+\frac12 \sum_{i,j, i', j'=1}^k\frac{\partial^2 \det(I_k+J)}{\partial J_{ij}\partial J_{i'j'}}dJ_{ij}dJ_{i'j'}.
\]
First, we know that
\[
\frac{\partial \det(J)}{\partial J_{ij}}=
\frac{\partial \sum_{\ell=1}^k J_{i\ell}\tilde{J}_{i\ell}}{\partial J_{ij}}=\tilde{J}_{ij}
\]
where $\tilde{J}=\det (J)(J^T)^{-1}$ is the cofactor of $J$. Hence the first order term  writes $\det(I_k+J)\tr ((I_k+J)^{-1}dJ)$. Next, we will use the following formula to compute the cross second order derivatives:
\[
\frac{\partial^2 \det(J)}{\partial x\partial y}=(\det (J))\bigg( \tr \left(J^{-1}\frac{\partial^2 J}{\partial x\partial y} \right)
+\tr \left(J^{-1}\frac{\partial J}{\partial x} \right)\tr \left(J^{-1}\frac{\partial J}{\partial y}  \right)
-\tr\left(J^{-1}\frac{\partial J}{\partial x} J^{-1}\frac{\partial J}{\partial y} \right)
\bigg).
\]
Since $\frac{\partial J}{\partial J_{ij}}=E_{ij}$, clearly $\frac{\partial^2 J}{\partial J_{ij}\partial J_{i'j'}}=0$. We also have $J^{-1}\frac{\partial J}{\partial J_{ij}}=\sum_{\ell}(J^{-1})_{\ell i}E_{\ell j} $ and $\tr \left( J^{-1}\frac{\partial J}{\partial J_{ij}}\right)=(J^{-1})_{ji}$. Hence 
\[
\frac{\partial^2 \det(J)}{\partial J_{ij}\partial J_{i'j'}}=(\det (J))\bigg( 
(J^{-1})_{ji}(J^{-1})_{j'i'}
-(J^{-1})_{j'i}(J^{-1})_{ji'}
\bigg),
\]
and
\[
\frac{\partial^2 \det(I_k+J)}{\partial J_{ij}\partial J_{i'j'}}=(\det (I_k+J))\bigg( 
((I_k+J)^{-1})_{ji}((I_k+J)^{-1})_{j'i'}
-((I_k+J)^{-1})_{j'i}((I_k+J)^{-1})_{ji'}
\bigg).
\]
Moreover, from \eqref{eq-dJ} we know that
\[
dJ_{ij}dJ_{i'j'}=2dt\bigg((J+J^2)_{i'j}(I_k+J)_{ij'}+(J+J^2)_{ij'}(I_k+J)_{i'j} \bigg)
\]
Hence we have
\begin{align*}
&d(\det(I_k+J))=\det(I_k+J)\tr ((I_k+J)^{-1}dJ)\\
&+ \sum_{i,j, i', j'=1}^k\det (I_k+J)\bigg( 
((I_k+J)^{-1})_{ji}((I_k+J)^{-1})_{j'i'}
-((I_k+J)^{-1})_{j'i}((I_k+J)^{-1})_{ji'}
\bigg)\\
&\quad\quad\cdot\bigg((J+J^2)_{i'j}(I_k+J)_{ij'}+(J+J^2)_{ij'}(I_k+J)_{i'j} \bigg)dt\\
&=\det(I_k+J)\tr ((I_k+J)^{-1}dJ)
- 2(k-1) \det(I_k+J)\tr(J)dt.
\end{align*}
From \eqref{eq-dJ} we know
\[
\tr ((I_k+J)^{-1}dJ)=\tr\left(\sqrt{J}(d\bB+d\bB^*) \right)+ 2k (n-k+\tr (J))dt.
\]
Hence
\begin{align*}
&d(\det(I_k+J))=\det(I_k+J)\,\tr\left(\sqrt{J}(d\bB+d\bB^*) \right)+ 2\det(I_k+J)(k(n-k)+ \tr(J))dt.
\end{align*}
As a direct consequence of $d\langle \det(I_k+J),\det(I_k+J)\rangle=4\,\det(I_k+J)^2\tr(J)dt$, we obtain \eqref{eq-logdet-I+J} using It\^o's formula.
\end{proof}

\begin{lemma}\label{martingale g}
For every $\alpha \ge 0$ the process
\[
M_t^{\alpha}=e^{2k\alpha(n-k)t}  \left(\frac{\det(I_k+J_0)}{\det(I_k+J_t)}\right)^\alpha \exp\left(-2\alpha^2 \int_0^t \tr(J)ds \right)
\]
is a martingale.
\end{lemma}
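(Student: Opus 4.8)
The plan is to recognize $M^\alpha$ as a Dol\'eans--Dade exponential built from the martingale part of $\log\det(I_k+J)$, and then to upgrade it from a local to a true martingale via a deterministic bound. First I would set $L_t:=\log\det(I_k+J_t)$ and invoke Lemma \ref{thm-J2}, which gives the semimartingale decomposition $dL_t = dN_t + 2k(n-k)\,dt$, where
\[
N_t:=\int_0^t \tr\!\left(\sqrt{J}\,(d\bB+d\bB^*)\right)
\]
is a continuous local martingale with $N_0=0$. Since the proof of that lemma records $d\langle \det(I_k+J)\rangle = 4\det(I_k+J)^2\tr(J)\,dt$ and $\det(I_k+J)=e^{L}$, It\^o's formula applied to the logarithm yields $d\langle N\rangle_t = 4\tr(J)\,dt$, so that $\int_0^t \tr(J)\,ds = \tfrac14\langle N\rangle_t$.

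Next I would substitute these identities into the definition of $M^\alpha$. Writing $\big(\det(I_k+J_0)/\det(I_k+J_t)\big)^\alpha = e^{\alpha(L_0-L_t)}$ and using $L_t-L_0 = N_t + 2k(n-k)t$, the deterministic prefactor $e^{2k\alpha(n-k)t}$ cancels exactly the drift term $e^{-2k\alpha(n-k)t}$ coming from $\alpha(L_0-L_t)$, so that
\[
M_t^\alpha = \exp\!\left(-\alpha N_t - 2\alpha^2\!\int_0^t \tr(J)\,ds\right) = \exp\!\left(-\alpha N_t - \tfrac12\alpha^2\langle N\rangle_t\right).
\]
This is precisely the stochastic exponential $\mathcal{E}(-\alpha N)_t$, and hence it is automatically a continuous local martingale with $M_0^\alpha=1$.

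It remains to promote $M^\alpha$ to a genuine martingale, which is the only delicate point, since a stochastic exponential may fail to be a true martingale. The clean way around a Novikov-type estimate is to observe that $M^\alpha$ is uniformly bounded on compact time intervals. Indeed, $I_k+J = (ZZ^*)^{-1}$, and the frame relation $X^*X + Z^*Z = I_k$ (valid since $\left(\begin{smallmatrix}X_t\\ Z_t\end{smallmatrix}\right)$ lives on $V_{n,k}$ by Theorem \ref{main:s1}) forces the eigenvalues of $ZZ^*$ into $[0,1]$, whence $\det(I_k+J_t)=\det(ZZ^*)^{-1}\ge 1$ almost surely, using that $\det Z_t\ne 0$ for all $t$. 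Therefore, for $\alpha\ge 0$, one has $\big(\det(I_k+J_0)/\det(I_k+J_t)\big)^\alpha\le \det(I_k+J_0)^\alpha$, while $\exp(-2\alpha^2\int_0^t\tr(J)\,ds)\le 1$ because $J$ is positive semidefinite. Combining these bounds gives, for every $T>0$,
\[
\sup_{0\le t\le T} M_t^\alpha \;\le\; e^{2k\alpha(n-k)T}\,\det(I_k+J_0)^\alpha \;<\;\infty,
\]
a deterministic constant. A continuous local martingale that is dominated on $[0,T]$ by an integrable random variable is a true martingale on $[0,T]$ (localize by $\tau_n\uparrow\infty$ and pass to the limit by dominated convergence); since $T$ is arbitrary, $M^\alpha$ is a martingale on $[0,\infty)$. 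The main obstacle is exactly this last step---excluding the strict-supermartingale behaviour generic to exponential local martingales---and the lower bound $\det(I_k+J_t)\ge 1$ is what renders it elementary.
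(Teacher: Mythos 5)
Your proof is correct and follows essentially the same route as the paper: both identify $M^\alpha$ with the stochastic exponential $\mathcal{E}(-\alpha N)$ of the local martingale $N_t=\int_0^t \tr\bigl(\sqrt{J}(d\bB+d\bB^*)\bigr)$ coming from Lemma \ref{thm-J2}, and both conclude via the deterministic bound $\det(I_k+J_t)\ge 1$ and $\tr(J)\ge 0$, which makes $M^\alpha$ bounded on compact time intervals and hence a true martingale. The only difference is cosmetic: you spell out the domination step that the paper compresses into ``it is clear that $|M_t^\alpha|\le Ce^{2k\alpha(n-k)t}$.''
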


\begin{proof}
Consider the exponential local martingale
\[
M_t^\alpha:=\exp\bigg(-\alpha\int_0^t \tr (\sqrt{J} (d\bB+ d\bB^*))-2\alpha^2\int_0^t \tr(J)ds \bigg),
\] 
where $\bB$ is the Brownian motion as given in Theorem \ref{thm-J2}. From Lemma \ref{thm-J2}, we have 
\[
\left(\frac{\det(I_k+J_t)}{\det(I_k+J_0)}\right)^\alpha=\exp\bigg(\alpha\left(\int_0^t\tr\left(\sqrt{J}(d\bB+d\bB^*) \right)+ 2k(n-k) ds \right)\bigg),
\]
and thus
\[
M_t^\alpha=e^{2k\alpha(n-k)t}  \left(\frac{\det(I_k+J_0)}{\det(I_k+J_t)}\right)^\alpha \exp\left(-2\alpha^2 \int_0^t \tr(J)ds \right).
\]
From this expression, it is clear that there exists a constant $C>0$ such that we almost surely have $|M_t^\alpha| \le Ce^{2k\alpha(n-k)t} $ and thus the process $(M_t^\alpha)_{t \ge 0}$ is a martingale.
%
%
\end{proof}

In the next lemma, we provide a formula for the Laplace transform of the functional $\int_0^t \tr(J)ds$ using a Girsanov transform. In this computation,  the transition kernel of one-dimensional Jacobi diffusions naturally appears. We recall the formula for this transition kernel. If we denote by $p^{a,b}_t(x,y)$ the transition density, with respect to the Lebesgue measure, of the diffusion with generator 
\[
2 (1-x^2)\frac{\partial^2}{\partial x^2}-2\left( (a+b+2)x +a -b \right)\frac{\partial}{\partial x}
\]
and initiated from $x \in (-1,1)$, then we have

\begin{align}\label{eq-jacobi-kernel}
 & p^{a,b}_t(x,y)=(1+y)^{b}(1-y)^{a} \sum_{m=0}^{+\infty} c_{m,a, b} e^{-2m(m+a+b+1)t}P_m^{a,b}(x)P_m^{a,b}(y),
\end{align}
where $c_{m,a, b}= \frac{2m+a+b+1}{2^{a+b+1}}\frac{\Gamma(m+a+b+1)\Gamma(m+1)}{\Gamma(m+a+1)\Gamma(m+b+1)}$ and where  $P_m^{a,b}(x)$, $m\in \mathbb{Z}_{\ge0}$ are the  Jacobi polynomials given by
\[
P_m^{a,b}(x)=\frac{(-1)^m}{2^mm!(1-x)^{a}(1+x)^b}\frac{d^m}{dx^m}((1-x)^{a+m}(1+x)^{b+m}).
\]

\begin{lemma}\label{kernel J}
For every $\alpha \ge 0$ and $t>0$
\begin{align*}
 & \E\left( e^{-2\alpha^2 \int_0^t \tr(J)ds}\right) \\
 =& C e^{ \left(\frac{1}{3} k(k-1) \left( 3n-4k+6\alpha+2 \right) -2k(n-k)\alpha\right) t}  \int_{\Delta_k} \mathrm{det} \left( \frac{p^{n-2k,2\alpha}_{t} \left(\frac{1-\lambda_i(0)}{1+\lambda_i(0)},x_j\right)}{(1+x_j)^{\alpha}}\right)_{ i, j }  \prod_{i>j}(x_i-x_j) \, \, dx.
\end{align*}
where 
\[
C=\prod_\ell \frac{( 1+\lambda_\ell(0))^\alpha}{2^\alpha} \prod_{i>j}  \frac{(1+\lambda_i(0))(1+\lambda_j(0))}{2(\lambda_j(0)-\lambda_i(0))}
\]
 is the normalization constant, $\lambda_1(0),\cdots,\lambda_k(0)$ are the ordered eigenvalues of $J_0$,  $p^{n-2k,2\alpha}_{t}$ is given by the formula \eqref{eq-jacobi-kernel}  and 
\[
\Delta_k=\{ x \in [-1,1]^k, -1 \le x_1 <\cdots <x_k \le 1 \}. 
\]

\end{lemma}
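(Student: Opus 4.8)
The plan is to evaluate the Laplace transform by a Girsanov change of measure driven by the exponential martingale of Lemma \ref{martingale g}, and then to recognize the eigenvalue process under the new measure as a Karlin--McGregor diffusion to which the Karlin--McGregor formula applies. First I would use Lemma \ref{martingale g} to isolate the functional: solving the defining identity for $M_t^\alpha$ gives
\[
\exp\left(-2\alpha^2\int_0^t \tr(J)\,ds\right)=M_t^\alpha\, e^{-2k\alpha(n-k)t}\left(\frac{\det(I_k+J_t)}{\det(I_k+J_0)}\right)^\alpha .
\]
Since $(M_t^\alpha)_{t\ge0}$ is a true martingale with $M_0^\alpha=1$, introducing the probability measure $\tilde{\mathbb{P}}$ defined by $d\tilde{\mathbb{P}}/d\mathbb{P}\big|_{\mathcal F_t}=M_t^\alpha$ and taking expectations yields
\[
\E\left(e^{-2\alpha^2\int_0^t \tr(J)ds}\right)=e^{-2k\alpha(n-k)t}\,\det(I_k+J_0)^{-\alpha}\,\tilde{\E}\left(\det(I_k+J_t)^\alpha\right).
\]
This already produces the prefactor $e^{-2k(n-k)\alpha t}$ of the exponent and reduces the problem to computing the law of the eigenvalues of $J_t$ under $\tilde{\mathbb{P}}$.

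Next I would identify that law. Writing $M_t^\alpha$ as in the proof of Lemma \ref{martingale g} and using \eqref{eq-logdet-I+J}, the integrand driving $M_t^\alpha$ is $-\alpha\,\tr(\sqrt J(d\bB+d\bB^*))$, and the crucial structural point is that the martingale part of $\log\det(I_k+J)$ coincides with the sum of the martingale parts of the $\log(1+\lambda_i)$, namely $\sum_i 2\sqrt{\lambda_i}\,dB^i$ with $(B^i)$ the driving Brownian motions of Lemma \ref{thm-lambda}. Hence the Girsanov drift acts \emph{diagonally} on the eigenvalues: under $\tilde{\mathbb{P}}$ each $B^i$ acquires the drift $-2\alpha\sqrt{\lambda_i}$, so that \eqref{eq-lambda-sde} gains the extra term $-4\alpha\lambda_i(1+\lambda_i)\,dt$. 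Passing to $\rho_i=\tfrac{1-\lambda_i}{1+\lambda_i}$, this extra drift becomes $4\alpha(1-\rho_i)\,dt$, and comparing with the computation preceding the definition of $\mathcal L_{n,k}$ one checks that the generator of $\rho$ under $\tilde{\mathbb{P}}$ is exactly the one obtained by changing the single-particle Jacobi parameters from $(n-2k,0)$ to $(n-2k,2\alpha)$. Thus, under $\tilde{\mathbb{P}}$, the process $\rho$ is the Karlin--McGregor diffusion associated to $k$ independent Jacobi$(n-2k,2\alpha)$ processes conditioned by the Vandermonde ground state, and the ground-state constant becomes $\tfrac13 k(k-1)(3n-4k+6\alpha+2)$ (the $\alpha=0$ value $\tfrac13 k(k-1)(3n-4k+2)$ shifted by $2\alpha\cdot\tfrac{k(k-1)}{2}$, which is precisely where the $6\alpha$ in the exponent comes from).

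Finally I would apply the Karlin--McGregor formula exactly as in the derivation leading to \eqref{limit coulomb}, now with parameters $(n-2k,2\alpha)$, to obtain the $\tilde{\mathbb{P}}$-density of $\rho(t)$,
\[
e^{\frac13 k(k-1)(3n-4k+6\alpha+2)t}\,\frac{\prod_{i>j}(x_i-x_j)}{\prod_{i>j}(\rho_i(0)-\rho_j(0))}\,\det\left(p_t^{n-2k,2\alpha}(\rho_i(0),x_j)\right)_{1\le i,j\le k}\mathbf{1}_{\Delta_k}(x).
\]
Then $\tilde{\E}(\det(I_k+J_t)^\alpha)=\tilde{\E}\big(\prod_i(1+\lambda_i(t))^\alpha\big)$ is computed by the change of variables $1+\lambda_i=\tfrac{2}{1+x_i}$, which turns $\prod_i(1+\lambda_i(t))^\alpha$ into $2^{k\alpha}\prod_j(1+x_j)^{-\alpha}$; absorbing each factor $(1+x_j)^{-\alpha}$ into the $j$-th column of the determinant produces precisely the kernel $\tfrac{p_t^{n-2k,2\alpha}(\rho_i(0),x_j)}{(1+x_j)^\alpha}$ of the statement. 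Collecting the prefactor $\det(I_k+J_0)^{-\alpha}=\prod_\ell(1+\lambda_\ell(0))^{-\alpha}$, the factor $2^{k\alpha}$, and the Vandermonde normalization $\prod_{i>j}(\rho_i(0)-\rho_j(0))^{-1}=\prod_{i>j}\tfrac{(1+\lambda_i(0))(1+\lambda_j(0))}{2(\lambda_j(0)-\lambda_i(0))}$, which follows from $\rho_i(0)-\rho_j(0)=\tfrac{2(\lambda_j(0)-\lambda_i(0))}{(1+\lambda_i(0))(1+\lambda_j(0))}$, assembles the normalization constant $C$.

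The main obstacle is the second step: justifying that the Girsanov drift decouples over the eigenvalues and amounts \emph{exactly} to the parameter shift $b:0\mapsto 2\alpha$ in the Jacobi operator $\mathcal G_{a,b}$, together with the correct evaluation of the $\alpha$-dependent ground-state constant. Once this is established, the Karlin--McGregor representation and the change of variables $\lambda\leftrightarrow\rho$ are routine bookkeeping of the prefactors.
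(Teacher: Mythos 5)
Your proposal is correct and follows essentially the same route as the paper's proof: the Girsanov change of measure by the martingale of Lemma \ref{martingale g}, identification of the $\rho_i=\frac{1-\lambda_i}{1+\lambda_i}$ dynamics under the new measure as the non-colliding Jacobi system with single-particle parameters $(n-2k,2\alpha)$, and then the Karlin--McGregor density combined with the change of variables $1+\lambda=\frac{2}{1+x}$ to assemble the kernel and the constant $C$. Two small remarks: your extra drift $4\alpha(1-\rho_i)$ is the one consistent with the kernel $p^{n-2k,2\alpha}_t$ and the ground-state constant (the paper's displayed SDE for $\rho_i$ under $P^{\alpha}$ carries a coefficient $6\alpha$ in front of $\rho_i$ where $2\alpha$ is what this computation gives), and the ground-state energy shift is $2\alpha k(k-1)$ rather than the $2\alpha\cdot\tfrac{k(k-1)}{2}$ in your parenthetical, since $\tfrac{1}{3}k(k-1)\cdot 6\alpha=2\alpha k(k-1)$.
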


\begin{proof}
Let $\alpha \ge 0$ and consider the probability measure $P^{\alpha}$ defined by
\[
P^{\alpha}|_{\mathcal{F}_t}=M^{\alpha}_t\cdot P|_{\mathcal{F}_t}.
\] 
We first note that
\begin{align}\label{e-alpha}
\E\left( e^{-2\alpha^2 \int_0^t \tr(J)ds}\right)=e^{-2k(n-k)\alpha t}\, \E^{\alpha} \left[ \left(\frac{\det(I_k+J_t)}{\det(I_k+J_0)}\right)^\alpha  \right].
\end{align}
From Girsanov theorem, the process
\[
\beta_t=\bB_t+2 \alpha\int_0^t\sqrt{J} ds
\]
is under $P^\alpha$ a $k \times k$-matrix-valued Brownian motion and we have
\[
dJ=\sqrt{I_{k}+J} d\beta^*\sqrt{I_k+J} \sqrt{J}+\sqrt{J}\sqrt{I_k+J}d\beta \sqrt{I_{k}+J}+2\left(n-k-2\alpha J+\tr(J)\right)(I_k+J)dt.
\]
We now denote by $\lambda(t)= (\lambda_i(t))_{1\le i\le k}$ the eigenvalues of $J_t$, $t\ge0$. From the previous equation satisfied by $J$ we deduce that there exists a Brownian motion $(B_t)_{t \ge 0}$ in $\mathbb{R}^k$ for the probability measure $P^\alpha$  such that
\begin{equation}\label{eq-lambda-sde-Pl}
d\lambda_i=2(1+\lambda_i)\sqrt{\lambda_i}dB^i+2(1+\lambda_i) \bigg(n-2k+1-(2k+2\alpha-3)\lambda_i+2\lambda_i (1+\lambda_i)\sum_{\ell\not=i} \frac{1}{\lambda_i-\lambda_\ell}\bigg)dt.
\end{equation}
Let us denote  $\rho_i=\frac{1-\lambda_i}{1+\lambda_i}$, $i=1,\dots, k$. Then, using It\^o's formula and the previous equation, we have
\begin{align*}
d\rho_i =-2\sqrt{1-\rho_i^2}dB^i-2\left( \left(n-2k-2\alpha+(n-2k+6\alpha+2)\rho_i\right)+2\sum_{\ell\not=i}\frac{1-\rho_i^2}{\rho_\ell-\rho_i}\right)dt.
\end{align*}
Using then the formula for the density of non-colliding Jacobi processes, see \cite{interlacing}, we deduce that the process $(\rho(t))_{t \ge 0}$ has, under $P^\alpha$, a density with respect to the Lebesgue measure $dx$ given by 
\[
e^{ \frac{1}{3} k(k-1) \left( 3n-4k+6\alpha+2 \right) t}  \frac{\prod_{i>j}(x_i-x_j)}{\prod_{i>j}(\rho_i(0)-\rho_j(0))} \mathrm{det} \left( p^{n-2k,2\alpha}_{t} (\rho_i(0),x_j)\right)_{1\le i, j \le k}  \, \mathbf{1}_{\Delta_k} (x).
\]
We conclude then with \eqref{e-alpha}.
\end{proof}

We are now ready for the proof of Theorem \ref{limit_oi}.

\begin{proof}[Proof of Theorem \ref{limit_oi}]

 From Lemma \ref{kernel J}, we have for every $\alpha \ge 0$ and $t>0$
\begin{align}\label{laplace-asymp}
 & \E\left( e^{-2\alpha^2 \int_0^t \tr(J)ds}\right) \\
 =& C e^{ \left(\frac{1}{3} k(k-1) \left( 3n-4k+6\alpha+2 \right) -2k(n-k)\alpha\right) t}  \int_{\Delta_k} \mathrm{det} \left( \frac{p^{n-2k,2\alpha}_{t} \left(\frac{1-\lambda_i(0)}{1+\lambda_i(0)},x_j\right)}{(1+x_j)^{\alpha}}\right)_{ i, j }  \prod_{i>j}(x_i-x_j) \, \, dx. \notag
\end{align}

In order to analyze the large time behavior of this Laplace transform  we we will use the formula \eqref{eq-jacobi-kernel}.  We can write
\begin{align*}
 & p^{n-2k,2\alpha}_t(x,y)=(1+y)^{2\alpha}(1-y)^{n-2k}\sum_{m=0}^{+\infty} c_{m,n-2k+ 2\alpha} e^{-2m(m+n-2k+2\alpha+1)t}P_m^{n-2k,2\alpha}(x)P_m^{n-2k,2\alpha}(y),
\end{align*}
Similarly to \cite{demni2010beta}, or Section 3.9.1 in \cite{interlacing}, denoting as before $\rho_i(0)=\frac{1-\lambda_i(0)}{1+\lambda_i(0)}$ we now compute 
\begin{align*}
 & \mathrm{det} \left( p^{n-2k,2\alpha}_{t} (\rho_i(0),x_j)\right)_{1\le i, j \le k} \\
 =& \sum_{\sigma \in \mathfrak{S}_k} \mathrm{sgn}(\sigma) \prod_{i=1}^k  p^{n-2k,2\alpha}_{t}(\rho_{\sigma(i)} (0),x_i) \\
 =& \sum_{\sigma \in \mathfrak{S}_k} \mathrm{sgn}(\sigma) \prod_{i=1}^k \bigg[ (1-x_i)^{n-2k} (1+x_i)^{2\alpha} \sum_{m=0}^{+\infty}  c_{m,n-2k+ 2\alpha} e^{-2m(m+n-2k+2\alpha+1)t}P_m^{n-2k,2\alpha}(\rho_{\sigma(i)} (0))P_m^{n-2k,2\alpha}(x
 _i)\bigg]\\
 =& V_\alpha(x) \sum_{\sigma \in \mathfrak{S}_k} \mathrm{sgn}(\sigma)  \sum_{m_1,\cdots,m_k=0}^{+\infty}  \prod_{i=1}^k c_{m_i,n-2k+ 2\alpha} e^{-2m_i(m_i+n-2k+2\alpha+1)t}P_{m_i}^{n-2k,2\alpha}(\rho_{\sigma(i)} (0))P_{m_i}^{n-2k,2\alpha}(x_i)
\end{align*}
where $V_\alpha (x)=\prod_{i=1}^k (1-x_i)^{n-2k}(1+x_i)^{2\alpha}$. We can now write
\begin{align*}
 &  \sum_{\sigma \in \mathfrak{S}_k} \mathrm{sgn}(\sigma)  \sum_{m_1,\cdots,m_k=0}^{+\infty}  \prod_{i=1}^k c_{m_i,n-2k+ 2\alpha} e^{-2m_i(m_i+n-2k+2\alpha+1)t}P_{m_i}^{n-2k,2\alpha}(\rho_{\sigma(i)} (0))P_{m_i}^{n-2k,2\alpha}(x_i) \\
 =&  \sum_{m_1,\cdots,m_k=0}^{+\infty} \left(\prod_{i=1}^k c_{m_i,n-2k+ 2\alpha} e^{-2m_i(m_i+n-2k+2\alpha+1)t} P_{m_i}^{n-2k,2\alpha}(x_i) \right)\sum_{\sigma \in \mathfrak{S}_k} \mathrm{sgn}(\sigma) \prod_{i=1}^k  P_{m_i}^{n-2k,2\alpha}(\rho_{\sigma(i)} (0)) \\
 =&   \sum_{m_1,\cdots,m_k=0}^{+\infty} \left(\prod_{i=1}^k c_{m_i,n-2k+ 2\alpha} e^{-2m_i(m_i+n-2k+2\alpha+1)t} P_{m_i}^{n-2k,2\alpha}(x_i) \right)  \mathrm{det} \left( P_{m_i}^{n-2k,2\alpha}(\rho_{j} (0))\right)_{1 \le i,j \le k}.
\end{align*} 
By skew-symmetrization, we can rewrite the previous sum as
\begin{align*}
 \sum_{m_1 < \cdots <m_k} \left(\prod_{i=1}^k c_{m_i,n-2k+ 2\alpha} e^{-2m_i(m_i+n-2k+2\alpha+1)t}  \right)\mathrm{det} \left( P_{m_i}^{n-2k,2\alpha}(x_j)\right)_{1 \le i,j \le k}  \mathrm{det} \left( P_{m_i}^{n-2k,2\alpha}(\rho_{j} (0))\right)_{1 \le i,j \le k}.
\end{align*}
Let us note that when $t \to +\infty$, the term of leading order in this sum corresponds to  $(m_1,\cdots,m_k)=(0,1,\cdots,k-1)$ and is given by
\begin{align}\label{skew-fu}
\left(\prod_{i=1}^k c_{i-1,n-2k+ 2\alpha}   \right) e^{- \frac{1}{3} k(k-1) \left( 3n-4k+6\alpha+2 \right) t}   \mathrm{det} \left( P_{i-1}^{n-2k,2\alpha}(x_j)\right)_{1 \le i,j \le k}  \mathrm{det} \left( P_{i-1}^{n-2k,2\alpha}(\rho_{j} (0))\right)_{1 \le i,j \le k}
\end{align}

On the other hand, from \eqref{laplace-asymp} one has for every $\lambda \ge 0$ and $t>0$ that
\begin{align*}
 & \E\left( e^{-\frac{\lambda}{t^2} \int_0^t \tr(J)ds}\right) \\
 =& C e^{ \left(\frac{1}{3} k(k-1) \left( 3n-4k+6\frac{\sqrt{\lambda}}{{\sqrt{2} t}}+2 \right) -2k(n-k)\frac{\sqrt{\lambda}}{{\sqrt{2} t}}\right) t}  \int_{\Delta_k} \mathrm{det} \left( \frac{p^{n-2k,2\frac{\sqrt{\lambda}}{{\sqrt{2} t}}}_{t} \left(\rho_i(0),x_j\right)}{(1+x_j)^{\frac{\sqrt{\lambda}}{{\sqrt{2} t}}}}\right)_{ i, j }  \prod_{i>j}(x_i-x_j) \, \, dx.
\end{align*}
Using \eqref{skew-fu}, one then deduces that for every $\lambda \ge 0$,
\begin{equation}\label{eq-limit-Laplace}
\lim_{t \to +\infty} \E\left( e^{-\frac{\lambda}{t^2} \int_0^t \tr(J)ds}\right) = \tilde{C} e^{  -k(n-k) \sqrt{2\lambda} } ,
\end{equation}
where $\tilde{C}$ is a constant depending only on $n,k,\rho_i(0)$. For $\lambda=0$, $\E\left( e^{-\frac{\lambda}{t^2} \int_0^t \tr(J)ds}\right)=1$ and therefore $\tilde{C}=1$.
One now concludes using an inverse Laplace transform that the following convergence takes place in distribution when $t \to +\infty$
\[
\frac{1}{t^2} \int_0^t  \tr(J)ds \to X,
\]
where $X$ is a random variable on $[0,+\infty)$ with density $\frac{k(n-k)}{\sqrt{2\pi}  x^{3/2} }e^{-\frac{k^2(n-k)^2}{2x}}$. We incidentally note that $X$ is therefore distributed as the hitting time of $k(n-k)$ by a one-dimensional Brownian motion, even though this does not seem to be readily explainable. 
%
%
%
%
%
\end{proof}

\subsection{Asymptotics of a generalized stochastic area}

By the definition of the one-form $\eta$ in \eqref{defeta}, we note that
\begin{align}\label{eq-tr-eta}
\int_{w[0,t]} \mathrm{tr} (\eta) & =\frac12 \mathrm{tr} \left[ \int_0^t   (I_k+J)^{-1/2} (\circ dw^* \, w-w^*\circ dw)(I_k+J)^{-1/2}\right] \notag\\
 & =\frac12 \mathrm{tr} \left[ \int_0^t   (I_k+J)^{-1/2} ( dw^* \, w-w^* dw)(I_k+J)^{-1/2}\right]
\end{align}
where as before $J=w^* w$. From simple computations one can verify that
\[
\tr(d\eta)=\partial \overline{\partial}\log \det(I_k+w^*w),
\]
which implies that $i\tr(d\eta)$ is the K\"ahler form on $\widehat{G}_{n,k}$. Therefore $i\int_{w[0,t]} \mathrm{tr} (\eta)$ can be, in some sense, considered as a generalized stochastic area process on $\widehat{G}_{n,k}$; we refer to \cite{baudoin2017stochastic} for further explanation on the terminology of generalized stochastic area. In the proposition below we deduce large time limit distributions of such functionals.

\begin{proposition}\label{thm-main-3}

The following convergence holds in distribution when $t \to +\infty$
\[
\frac{1}{it} \int_{w[0,t]} \mathrm{tr} (\eta) \to \mathcal{C}_{k(n-k)},
\]
where $\mathcal{C}_{k(n-k)}$ is a Cauchy distribution of parameter $k(n-k)$.
\end{proposition}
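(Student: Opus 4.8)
The plan is to reduce the asymptotics of the generalized stochastic area $\int_{w[0,t]} \mathrm{tr}(\eta)$ to the asymptotics of the additive functional $\int_0^t \tr(J)\,ds$ already controlled by Theorem~\ref{limit_oi}, via a Dubins--Schwarz time change. First I would extract from \eqref{eq-tr-eta} the martingale structure of the real-valued process $Y_t := \frac{1}{i}\int_{w[0,t]} \mathrm{tr}(\eta) = \frac{1}{2i}\tr\!\left[\int_0^t (I_k+J)^{-1/2}(dw^* w - w^* dw)(I_k+J)^{-1/2}\right]$. Using the dynamics of $w$ from \eqref{w dynamics} in Theorem~\ref{main:s1}, one checks that $Y_t$ is a continuous local martingale, so by Dubins--Schwarz it can be written as $Y_t = \gamma_{\langle Y \rangle_t}$ for a standard one-dimensional Brownian motion $(\gamma_s)_{s\ge 0}$. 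The key computation is the quadratic variation: using $dw_{ij}\,d\bar w_{i'j'} = 2(I_{n-k}+ww^*)_{ii'}(I_k+w^*w)_{j'j}\,dt$ from Theorem~\ref{main:s1}, a trace computation should give $d\langle Y\rangle_t = c\,\tr(J)\,dt$ for an explicit constant $c$ (up to lower-order bounded-in-$t$ contributions), so that $\langle Y\rangle_t$ is, to leading order, a constant multiple of $\int_0^t \tr(J)\,ds$.

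Next I would establish that the driving Brownian motion $\gamma$ is asymptotically independent of the clock $\langle Y\rangle_t$. The natural mechanism here, following the Girsanov approach of \cite{Yor1, doumerc2005matrices, baudoin2017stochastic}, is that $Y_t$ is built from the horizontal Brownian increments of $w$ while $\tr(J)$ depends only on the radial part (the eigenvalue process $\lambda$ of $J$); these should decouple because the phase (the $\mathbf{U}(k)$-fiber direction carrying $\tr\eta$) is conditionally a Brownian motion with independent clock given the radial data, exactly as in the skew-product decomposition of Theorem~\ref{skew}. Granting this asymptotic independence, one writes
\[
\frac{Y_t}{t} = \frac{\gamma_{\langle Y\rangle_t}}{\sqrt{\langle Y\rangle_t}}\cdot \frac{\sqrt{\langle Y\rangle_t}}{t},
\]
where the first factor is a standard Gaussian $N(0,1)$ (Brownian scaling) independent in the limit of the second, and by Theorem~\ref{limit_oi} the quantity $\langle Y\rangle_t / t^2 = c\,t^{-2}\int_0^t\tr(J)\,ds$ converges in distribution to $cX$ with $X$ the inverse-gamma variable of density $\frac{k(n-k)}{\sqrt{2\pi}x^{3/2}}e^{-k^2(n-k)^2/(2x)}$.

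Finally I would identify the limit law. Since $X$ is distributed as the hitting time of $k(n-k)$ by a one-dimensional Brownian motion (as noted after Theorem~\ref{limit_oi}), $\sqrt{cX}$ is, after fixing the constant $c$ correctly, distributed so that $N/\sqrt{cX}$ with $N\sim N(0,1)$ independent yields precisely a Cauchy law of parameter $k(n-k)$: the quotient of an independent standard Gaussian by the square root of a one-sided stable-$1/2$ (inverse-gamma / Lévy) variable of the right scale is Cauchy. Concretely, $Y_t/t \to N/\sqrt{c^{-1}\langle Y\rangle_\infty\text{-scaling}}$, and matching the scale parameter to $k(n-k)$ pins down $c$ and gives $\frac{1}{it}\int_{w[0,t]}\tr(\eta)\to \mathcal{C}_{k(n-k)}$. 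The main obstacle I anticipate is rigorously justifying the asymptotic independence of $\gamma$ and $\langle Y\rangle_t$; the clean way to do this is to invoke the skew-product structure so that, conditionally on the path of the eigenvalue process $\lambda$, $Y_t$ is genuinely a time-changed Brownian motion with the given deterministic clock, after which the joint convergence of $(\gamma_{\langle Y\rangle_t}, \langle Y\rangle_t/t^2)$ follows from the conditional representation plus Theorem~\ref{limit_oi}.
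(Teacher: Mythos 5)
Your proposal is correct and follows essentially the same route as the paper: both arguments represent $\frac{1}{i}\int_{w[0,t]}\tr(\eta)$ as a one-dimensional Brownian motion run at the independent clock $\int_0^t\tr(J)\,ds$ (the independence coming from the fact that the martingale driving the area involves $d\bB^*-d\bB$ while the eigenvalue/radial dynamics involve $d\bB+d\bB^*$, exactly the conditional skew-product mechanism you describe), and then invoke Theorem \ref{limit_oi}. The only cosmetic differences are that the clock is exactly $\int_0^t\tr(J)\,ds$ (your constant $c$ is $1$ in the paper's normalization, with no lower-order corrections), and that the paper finishes by passing to the limit of the Laplace transform via \eqref{eq-limit-Laplace} rather than through your subordination identity $N\sqrt{X}\sim\mathcal{C}_{k(n-k)}$.
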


\begin{proof}
Using \eqref{w dynamics}, similarly to the proof of \eqref{eq-dJ} one can check that
\begin{align*}
dw^*w-w^*dw=\sqrt{I_{k}+J} d\mathbf{B}^*\sqrt{I_k+J}\sqrt{J}-\sqrt{J}\sqrt{I_k+J}d\mathbf{B}\sqrt{I_{k}+J}
\end{align*}
where $(\bB_t)_{t\ge0}$ is a $k\times k$-matrix-valued Brownian motion.
Therefore,
\[
 (I_k+J)^{-1/2} ( dw^* \, w-w^* dw)(I_k+J)^{-1/2}= d\bB^*\sqrt{J}- \sqrt{J}d\bB
\]
Consider then the diagonalization of $J=V\Lambda V^*$, where $V\in U(k)$ and $\Lambda=\mathrm{diag} \{\lambda_1,\dots, \lambda_k\}$. We obtain
\begin{align*}
d\bB^*\sqrt{J}- \sqrt{J}d\bB=V( V^{-1}d\bB^* V \sqrt{\Lambda}- \sqrt{\Lambda}V^{-1} d\bB V) V^{-1}.
\end{align*}
Therefore from \eqref{eq-tr-eta}, we have in distribution that
\[
\int_{w[0,t]} \mathrm{tr} (\eta)=  i \mathcal{B}_{\int_0^t \mathrm{tr}(J) ds} 
\]
where $\mathcal{B}$ is a one-dimensional Brownian motion independent from the process $\mathrm{tr}(J)$. Therefore, for every $\lambda>0$,
\[
\mathbb{E} \left( e^{-\lambda\frac{1}{i} \int_{w[0,t]} \mathrm{tr} (\eta)}  \right)=\mathbb{E} \left( e^{-\lambda  \mathcal{B}_{\int_0^t \mathrm{tr}(J) ds}} \right)=\mathbb{E} \left( e^{-\frac{\lambda^2}{2} \int_0^t  \mathrm{tr}(J)ds} \right).
\]

We conclude then from \eqref{eq-limit-Laplace} after straightforward computations
\end{proof}

\subsection{Asymptotic windings}

We are now interested in the windings of the complex valued process $\det (Z_t)$. We first note that from Theorem \ref{skew}, we have identity in law
\[
\det (Z_t) =\det (I_k+w_t^*w_t)^{-1/2} \det \Theta_t \, \det  \Omega_t.
\]
We shall then use the following result.

\begin{lemma}\label{trace lemma}
Let $G$ be a connected compact Lie group of $m \times m$ matrices with Lie algebra $\mathfrak{g}$. Let $(M_t)_{t \ge 0}$ be a $\mathfrak{g}$-valued continuous semimartingale such that $M_0=0$ and let $(C_t)_{t \ge 0}$ be the $G$-valued solution of the Stratonovich stochastic differential equation
\begin{align*}
dC_t = (\circ d  M_t ) \, C_t 
\end{align*}
Then, for $t \ge 0$, $\det C_t =(\det C_0 )\exp \left( \tr (M_t) \right )$.
\end{lemma}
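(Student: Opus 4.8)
The plan is to reduce the determinant of the matrix-valued process $C_t$ to a scalar Stratonovich equation via Jacobi's formula and then integrate. First I would recall that for a $G$-valued process with $dC_t = (\circ dM_t)\,C_t$, the determinant $\det C_t$ is a strictly positive (in fact unimodular-times-positive, but here just nonzero) scalar semimartingale, since $C_t$ takes values in the Lie group $G$ and $\det$ is smooth and nonvanishing on $G$. The key observation is that because the stochastic differential equation is written in Stratonovich form, the ordinary chain rule of differential calculus applies verbatim, so I may manipulate $\circ d$ exactly as I would a genuine time derivative.

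The main computation is Jacobi's formula in Stratonovich form: for a smooth nonsingular matrix path $C$ one has $\circ d(\det C) = \det(C)\,\tr\!\left(C^{-1}\circ dC\right)$. Substituting the defining equation $\circ dC_t = (\circ dM_t)\,C_t$ gives
\[
C_t^{-1}\circ dC_t = C_t^{-1}(\circ dM_t)\,C_t,
\]
and taking the trace, invariance of the trace under conjugation yields
\[
\tr\!\left(C_t^{-1}\circ dC_t\right) = \tr\!\left(C_t^{-1}(\circ dM_t)\,C_t\right) = \tr(\circ dM_t) = \circ d\,\tr(M_t).
\]
Therefore $\circ d(\det C_t) = \det(C_t)\,\circ d\,\tr(M_t)$, which is a scalar linear Stratonovich equation for $\det C_t$ driven by the scalar semimartingale $\tr(M_t)$.

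Integrating this linear equation is then immediate: the unique solution of $\circ dY_t = Y_t \circ dN_t$ with $Y_0$ given is $Y_t = Y_0 \exp(N_t - N_0)$, again because Stratonovich calculus obeys the classical exponential solution formula with no It\^o correction term. Applying this with $Y_t = \det C_t$ and $N_t = \tr(M_t)$, and using $M_0 = 0$ so that $\tr(M_0)=0$, I obtain
\[
\det C_t = (\det C_0)\exp\!\left(\tr(M_t)\right),
\]
which is the claim. I would also note that $\tr(M_t)$ is well-defined as a scalar semimartingale since $M_t$ is $\mathfrak{g}$-valued and continuous, and that the exponential makes sense because $\tr(M_t)$ is real- or complex-valued (for $\mathfrak{g}=\mathfrak{u}(k)$ it is purely imaginary, consistent with $|\det C_t|$ being determined by $|\det C_0|$).

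The step I expect to require the most care is justifying the Stratonovich Jacobi formula $\circ d(\det C) = \det(C)\tr(C^{-1}\circ dC)$ rigorously for a matrix-valued semimartingale rather than a smooth path. The clean way to handle this is to appeal to the fact that Stratonovich integrals transform under smooth maps exactly according to the ordinary chain rule: applying the smooth function $\det$ to the semimartingale $C_t$ and using the multilinearity of the determinant gives precisely $\sum_{i,j}\frac{\partial \det}{\partial C_{ij}}(C)\circ dC_{ij} = \det(C)\tr(C^{-1}\circ dC)$, with no second-order It\^o correction appearing because we are working in the Stratonovich framework. This is exactly the principle already invoked repeatedly in the earlier Stratonovich computations of the paper (for instance in the proof of Theorem \ref{lift}), so I would simply cite the chain-rule property of Stratonovich calculus and carry out the one-line trace manipulation above.
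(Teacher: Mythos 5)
Your proof is correct, but it takes a genuinely different route from the paper's. The paper never invokes the Stratonovich chain rule for $\det$: it approximates $(C_t)$ on $[0,T]$ by the piecewise-exponential interpolations $C^n_t = C^n_{t_k}\exp\bigl(\tfrac{2^n}{T}(t-t_k)(M_{t_{k+1}}-M_{t_k})\bigr)$, invokes a convergence theorem of L\'epingle to get $C^n \to C$ uniformly on $[0,T]$ in probability, computes $\det C^n_T=(\det C_0)\exp(\tr (M_T))$ exactly by telescoping the deterministic identity $\det e^A=e^{\tr (A)}$, and passes to the limit. Your argument instead works directly on the equation: the Stratonovich form of Jacobi's formula $\circ d(\det C)=\det(C)\,\tr\bigl(C^{-1}\circ dC\bigr)$, associativity of the Stratonovich integral to substitute $\circ dC=(\circ dM)\,C$, cyclic invariance of the trace applied to the (semimartingale) integrands, and the correction-free exponential solution of the resulting scalar linear Stratonovich equation. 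Both proofs are complete. Yours is shorter and self-contained: since $\det$ is a polynomial and $C_t$ stays in a matrix group (hence is invertible), the first-order chain rule and linearity of Stratonovich integration are all that is needed. The paper's version trades those stochastic-calculus manipulations for an appeal to an approximation theorem, which has the merit of exhibiting the lemma as a literal transcription of $\det e^A=e^{\tr (A)}$ along approximating paths. One cosmetic point: $\det C_t$ need not be positive (for $G=\mathbf{U}(k)$ it is unimodular), but you note this yourself and only nonvanishing is actually used.
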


\begin{proof}
Let $T>0$. Consider on the time interval $[0,T]$ the sequence of $G$-valued  semimartingales $(C_t^n)_{0 \le t \le T}$ inductively defined by
\begin{align*}
C^n_t = C_{t_k}^n \exp \left( \frac{2^n}{T} (t -t_k) (M_{t_{k+1}} -M_{t_k}) \right), \quad t_k \le t \le t_{k+1},
\end{align*}
where $t_k=\frac{kT}{2^n}$, $k =0,\dots, 2^n$. From Theorem 2 in \cite{lepingle}, the sequence of semimartingales $(C_t^n)_{0 \le t \le T}$ converges in probability to $(C_t)_{0 \le t \le T}$ uniformly on $[0,T]$. However,
\begin{align*}
\det (C^n_t) = \det(C_{t_k}^n)  \exp \left( \frac{2^n}{T} (t -t_k) \tr (M_{t_{k+1}} -M_{t_k}) \right), \quad t_k \le t \le t_{k+1}.
\end{align*}
We deduce therefore by induction that 
\[
\det (C^n_T)=(\det C_0 ) \exp \left( \tr (M_T)  \right).
\]
Letting then $n \to +\infty$ yields the conclusion. 
\end{proof}

Using the previous lemma, we deduce the following:

\begin{lemma}\label{Lemma theta}
For every $t \ge 0$, 
$
\det \Theta_t=\frac{\det Z_0}{ | \det Z_0|} \exp \left( \int_{w[0,t]} \mathrm{tr} (\eta) \right).
$
\end{lemma}

\begin{proof}
We have
\begin{align*}
\begin{cases}
d\Theta_t = \circ d \left( \int_{w[0,t]} \eta \right) \, \Theta_t \\
\Theta_0=(Z_0 Z^*_0)^{-1/2}Z_0.
\end{cases}
\end{align*}
Thus from Lemma \ref{trace lemma} we have $\det \Theta_t= \frac{\det Z_0}{ | \det Z_0|} \exp \left( \mathrm{tr} \left( \int_{w[0,t]} \eta \right) \right)$.
\end{proof}

We are now finally in position to prove one of our main results.

\begin{theorem}\label{winding section}
One has the polar decomposition
\[
\det (Z_t)=\varrho_t e^{i\theta_t}
\]
where $0< \varrho_t \le 1$ is a continuous semimartingale  and $\theta_t$ is a continuous martingale such that the following convergence holds in distribution when $t \to +\infty$
\[
\frac{\theta_t}{t} \to \mathcal{C}_{k(n-k)},
\]
where $\mathcal{C}_{k(n-k)}$ is a Cauchy distribution of parameter $k(n-k)$.
\end{theorem}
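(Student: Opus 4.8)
The plan is to assemble the polar decomposition from the factorization
\[
\det (Z_t) =\det (I_k+w_t^*w_t)^{-1/2} \det \Theta_t \, \det  \Omega_t
\]
established via Theorem \ref{skew}, and then read off the modulus and the argument separately. First I would identify the modulus: since $\det (I_k+w_t^*w_t)^{-1/2}$ is real and positive, and since $\Theta_t,\Omega_t$ are unitary (so their determinants lie on the unit circle), the real positive factor $\varrho_t = \det (I_k+w_t^*w_t)^{-1/2}$ is precisely the modulus. Because $J_t=w_t^*w_t$ is positive semidefinite, $\det(I_k+J_t)\ge 1$, which gives $0<\varrho_t\le 1$ as claimed, and continuity follows from the continuity of $w_t$.

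Next I would compute the argument $\theta_t$. By Lemma \ref{Lemma theta}, $\det \Theta_t = \frac{\det Z_0}{|\det Z_0|}\exp\left(\int_{w[0,t]} \mathrm{tr}(\eta)\right)$, and the key observation from Section 4.2 is that $\int_{w[0,t]}\mathrm{tr}(\eta)$ is purely imaginary, i.e. equals $i$ times a real process (indeed Proposition \ref{thm-main-3} shows it is $i\,\mathcal{B}_{\int_0^t \mathrm{tr}(J)ds}$ in law). For the independent factor $\det\Omega_t$, since $\Omega_t$ is a Brownian motion on $\mathbf{U}(k)$, I would apply Lemma \ref{trace lemma} again: writing $\Omega_t$ as the solution of $d\Omega_t = (\circ dD_t)\Omega_t$ for a Brownian motion $D_t$ on $\mathfrak{u}(k)$, one gets $\det\Omega_t = \exp(\mathrm{tr}(D_t))$, and $\mathrm{tr}(D_t)$ is purely imaginary since $\mathfrak{u}(k)$ consists of skew-Hermitian matrices, so $\mathrm{tr}(D_t)=i\,\mathrm{tr}(\widetilde D_t)$ for a real martingale. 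Collecting all phases, $\theta_t$ is the sum of the (real) argument of $\det Z_0$, the real martingale $\frac{1}{i}\int_{w[0,t]}\mathrm{tr}(\eta)$, and $\frac{1}{i}\mathrm{tr}(D_t)$; this yields exactly the announced representation $\theta_t = \theta_0 + i\,\mathrm{tr}(D_t) + \int_{w[0,t]}\alpha$ and shows $\theta_t$ is a continuous real martingale.

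Finally I would establish the limit law. Dividing by $t$, the contribution from $\mathrm{tr}(D_t)$ is negligible: being a martingale with deterministic quadratic variation growing linearly in $t$, $\frac{1}{t}\mathrm{tr}(D_t)\to 0$ in probability. Thus the asymptotics of $\frac{\theta_t}{t}$ are governed entirely by $\frac{1}{it}\int_{w[0,t]}\mathrm{tr}(\eta)$, whose convergence in distribution to $\mathcal{C}_{k(n-k)}$ is precisely Proposition \ref{thm-main-3}. By Slutsky's theorem the negligible term does not affect the limit, so $\frac{\theta_t}{t}\to \mathcal{C}_{k(n-k)}$ in distribution.

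The main obstacle I anticipate is the bookkeeping of phases: one must be careful that every factor contributing to the argument is genuinely imaginary (so that the modulus $\varrho_t$ absorbs no phase and $\theta_t$ absorbs no modulus), and that the decomposition into independent pieces $\int_{w[0,t]}\mathrm{tr}(\eta)$ and $\mathrm{tr}(D_t)$ is handled correctly when passing to the limit. The analytic content — the Cauchy limit — has already been extracted in Proposition \ref{thm-main-3}, so the remaining work is essentially to verify that the auxiliary martingale $\mathrm{tr}(D_t)$ is asymptotically irrelevant and to invoke Slutsky.
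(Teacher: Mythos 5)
Your proposal is correct and follows essentially the same route as the paper: it uses the skew-product factorization $\det (Z_t) =\det (I_k+w_t^*w_t)^{-1/2} \det \Theta_t \, \det  \Omega_t$ from Theorem \ref{skew} together with Lemmas \ref{trace lemma} and \ref{Lemma theta} to identify $\varrho_t=\det(I_k+J_t)^{-1/2}$ and $i\theta_t = i\theta_0 + \mathrm{tr}(D_t) + \int_{w[0,t]}\mathrm{tr}(\eta)$, and then concludes from Proposition \ref{thm-main-3}. Your additional care in checking that each phase factor is purely imaginary and in making explicit the Slutsky step (that $\mathrm{tr}(D_t)/t \to 0$ does not affect the Cauchy limit) only fills in details the paper leaves implicit.
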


\begin{proof}
From the decomposition $\det (Z_t) =\det (I_k+w_t^*w_t)^{-1/2} \det \Theta_t \, \det  \Omega_t$ one deduces from Lemmas  \ref{trace lemma} and \ref{Lemma theta} that
\[
\varrho_t =\det(I_k+J_t)^{-1/2}, \, \, i \theta_t = i\theta_0+ \mathrm{tr}(D_t)+ \int_{w[0,t]} \mathrm{tr} (\eta) 
\]
where $D_t$ is a Brownian motion on $\mathfrak{u}(k)$ independent from $w$ and $\theta_0$ is such that $ e^{i\theta_0}=\frac{\det Z_0}{| \det Z_0|}$ . The conclusion follows then from Proposition \ref{thm-main-3}.
\end{proof}

Let us remark that it is also possible to compute the asymptotic law of the radial part $\varrho_t$.   Indeed,  from the previous proof, we know that $\varrho_t =\det(I_k+J_t)^{-1/2}$ and the limit distribution of the ordered eigenvalues of $(I_k-J)(I_k+J)^{-1}$ is computed explicitly in \eqref{limit coulomb} to be a distribution with density
\[
c_{n,k} \prod_{1 \le i < j \le k}(x_i-x_j)^2  \prod_{i=1}^k (1-x_i)^{n-2k}  \, \, \mathbf{1}_{\Delta_k} (x) dx.
\]
Using then the Selberg's integral formula, one obtains that in distribution one has
\[
\varrho_t \to \varrho_\infty,
\]
where $\rho_\infty$ is a random variable such that for every $s \ge 0$
\[
\mathbb{E}( \varrho_\infty^s)=\tilde{c}_{n,k}\prod_{j=0}^{k-1}\frac{\Gamma\left( \frac{s}{2}+j+1 \right)}{\Gamma \left( \frac{s}{2}+n-k+j+1 \right)}
\]
where $\tilde{c}_{n,k}$ is a normalization constant. Thus, using uniqueness of the Mellin transform, one concludes that
\[
 \varrho_\infty^2=\prod_{j=1}^k \mathfrak{B}_{j,n-k}
\]
where $\mathfrak{B}_{j,n-k}$ are independent beta random variables with parameters $(j,n-k)$. This recovers a result by A. Rouault (Proposition 2.4 in \cite{MR2365642}).

\subsection{The case $k \ge n-k$}\label{duality section}

In this section, we prove  Theorem \ref{winding intro} in the case of $k \ge n-k$. Thus, unlike the rest of the paper, we assume in this section that $k \ge n-k$. This is essentially a duality argument equivalent to the isomorphism $G_{n,k} \simeq G_{n,n-k}$. Let
\[
U=\begin{pmatrix}  X & Y \\ Z & V \end{pmatrix}  \in \mathbf{U}(n)
\]
with $Z \in \mathbb{C}^{k \times k}$, $\det (Z) \neq 0$. Using $X^* X+ Z^*Z=I_k$ and $XX^* +YY^*=I_{n-k}$ we deduce the following equality of spectrum
\[
\mathbf{sp} ( Z^* Z)=\mathbf{sp} ( YY^*) \cup \{ 1 \}
\]
and that the eigenvalue 1 of $Z^* Z$ has multiplicity at least $2k-n$. In particular, we have
\[
\det ( Z^* Z)= \det ( Y Y^*)
\]
and
\[
\mathrm{tr}[(Z Z^*)^{-1}-I_k]= \mathrm{tr}[(Y Y^*)^{-1}-I_{n-k}].
\]
Consider now a Brownian motion
\[
U_t=\begin{pmatrix}  X_t & Y_t \\ Z_t & V_t \end{pmatrix}  \in \mathbf{U}(n)
\]
with $Z_0 \in \mathbb{C}^{k \times k}$, $\det (Z_0) \neq 0$. The process $(U_t^*)_{t \ge 0}$ is also a Brownian motion on $\mathbf{U}(n)$. Therefore,  when $t \to +\infty$,
\[
| \det (Z_t)|^2=| \det (Y_t^*)|^2\to \prod_{j=1}^{n-k} \mathfrak{B}_{j,k}
\]
For the study of the winding process of $\det (Z_t)$, we notice that in the case $k \le n-k$ the only part of the proof of Theorem \ref{winding section} that actually uses the fact that $k \le n-k$ is the proof of Theorem \ref{limit_oi} (in the case $k > n-k$ the stochastic differential equation for  $J=w^*w$ is only defined up to the hitting time $\inf \{ t \ge 0, \det J_t =0 \} <+\infty$ and the Girsanov transform method fails). To handle the case $k \ge n-k$, we note that $\mathrm{tr} J=\mathrm{tr}[(Z Z^*)^{-1}-I_k]= \mathrm{tr}[(Y Y^*)^{-1}-I_{n-k}]$. Since the process $(U_t^*)_{t \ge 0}$ is also a Brownian motion on $\mathbf{U}(n)$, one can use the argument of the proof of Theorem \ref{limit_oi} to deduce that
\[
\frac{1}{t^2} \int_0^t  \mathrm{tr}[(Y Y^*)^{-1}-I_{n-k}] ds \to X,
\]
where $X$ is a random variable on $[0,+\infty)$ with density $\frac{k(n-k)}{\sqrt{2\pi}  x^{3/2} }e^{-\frac{k^2(n-k)^2}{2x}}$. Therefore, the conclusion of Theorem \ref{winding section} still holds in the case of $k \ge n-k$.

\subsection{On the moments of $\det Z_t$}

To conclude the paper with a possible view toward free probability that concerns the limit $n \to \infty$ with $k=\alpha n$ (see \cite{MR2384475}) we show that it is possible to give a  formula for the mixed moments of the process $\det Z_t$ that can be deduced from the previous computations.

Let
$$U_t=\begin{pmatrix}  X_t & Y_t \\ Z_t & W_t \end{pmatrix}$$
 be a Brownian motion on the unitary group $\mathbf{U}(n)$ with  $Z_t \in \mathbb{C}^{k\times k}$, $1\le k \le n-1$. Assume that $\det Z_0 \neq 0$.  Let now $p,q \in \mathbb{N}$. As before, from the decomposition $\det (Z_t) =\det (I_k+w_t^*w_t)^{-1/2} \det \Theta_t \, \det  \Omega_t$ one has from Lemmas  \ref{trace lemma} and \ref{Lemma theta} that
\[
\det (Z_t) =\varrho_t e^{i \theta_t}
\]
with
\[
\varrho_t =\det(I_k+J_t)^{-1/2}, \, \, i \theta_t = i\theta_0+ \mathrm{tr}(D_t)+ \int_{w[0,t]} \mathrm{tr} (\eta) 
\]
where $D_t$ is a Brownian motion on $\mathfrak{u}(k)$ independent from $w$ and $\theta_0$ is such that $ e^{i\theta_0}=\frac{\det Z_0}{| \det Z_0|}$.
Therefore, using the martingale of Lemma \ref{martingale g} and the Girsanov transform as in Lemma \ref{kernel J}, we obtain:
\begin{align*}
\mathbb{E}\left( (\det Z_t)^p (\overline{\det Z_t})^q \right)& =\mathbb{E}(\varrho_t^{p+q}  e^{i (p-q) \theta_t})\\
 & =\mathbb{E}(\det(I_k+J_t)^{-(p+q)/2}  e^{i (p-q) \theta_t}) \\
 &=e^{i (p-q) \theta_0 -k (p-q)^2 t} \mathbb{E} \left(\det(I_k+J_t)^{-(p+q)/2}  e^{ (p-q) \int_{w[0,t]} \mathrm{tr} (\eta) }\right) \\
 & = e^{i (p-q) \theta_0 -k (p-q)^2 t} \mathbb{E} \left(\det(I_k+J_t)^{-(p+q)/2}  e^{(p-q)  i \mathcal{B}_{\int_0^t \mathrm{tr}(J) ds}  }\right)   \\
 &=  e^{i (p-q) \theta_0 -k (p-q)^2 t} \mathbb{E} \left(\det(I_k+J_t)^{-(p+q)/2}   e^{-\frac{(p-q)^2}{2} \int_0^t  \mathrm{tr}(J)ds}\right)  \\
 &=  \frac{e^{i (p-q) \theta_0 -k (p-q)^2 t-k(n-k)|p-q|t}}{\det(I+J_0)^{\frac{1}{2}|p-q|}} \mathbb{E}^{\frac{1}{2} |p-q|} \left(\det(I_k+J_t)^{-\min (p,q)}   \right)  
\end{align*}
This last term can be computed because from the proof of Lemma \ref{kernel J}, we have
\begin{align*}
 & \mathbb{E}^{\frac{1}{2} |p-q|} \left(\det(I_k+J_t)^{-\min (p,q)}   \right) \\
 =& C e^{ \left(\frac{1}{3} k(k-1) \left( 3n-4k+3|p-q|+2 \right) \right) t}  \int_{\Delta_k} \mathrm{det} \left( \frac{p^{n-2k,|p-q|}_{t} \left(\frac{1-\lambda_i(0)}{1+\lambda_i(0)},x_j\right)}{(1+x_j)^{-\min(p,q)}}\right)_{ i, j }  \prod_{i>j}(x_i-x_j) \, \, dx.
\end{align*}
where $C$ is a normalization constant depending only on $p,q$ and the ordered eigenvalues of $J_0$, and $p^{n-2k,|p-q|}_{t}$ is given by the formula \eqref{eq-jacobi-kernel}. We note that if $q=0$ the formula simplifies considerably and yields
\begin{align*}
\mathbb{E}\left( (\det Z_t)^p  \right)=(\det Z_0)^p  e^{ -k p^2 t-k(n-k)pt}.
\end{align*}

\bibliographystyle{amsplain}
\bibliography{biblio}

\

\

\begin{itemize}
\item \textsc{F.B: Department of Mathematics, University of Connecticut, Storrs, CT 06269, fabrice.baudoin@uconn.edu}
\item \textsc{J.W: Department of Mathematics, Purdue University, West Lafayette, IN 47907,  jingwang@purdue.edu}

\end{itemize}

\end{document}